 \newtheorem{remark}{Remark}
 \newtheorem{lemma}[remark]{Lemma}
 \newtheorem{theorem}[remark]{Theorem}
 \newtheorem{corollary}[remark]{Corollary}
\title{On the local metric dimension of corona product graphs}
\author{  Juan A.
Rodr\'{\i}guez-Vel\'{a}zquez,  Gabriel A. Barrag\'{a}n-Ram\'{\i}rez and\\ Carlos Garc\'{i}a G\'{o}mez
    \\
{\small Departament d'Enginyeria Inform\`atica i Matem\`atiques,}\\
{\small Universitat Rovira i Virgili,}  {\small Av. Pa\"{\i}sos
Catalans 26, 43007 Tarragona, Spain.} \\{\small
juanalberto.rodriguez\@@urv.cat, gbrbcn\@@gmail.com, carlos.garciag\@@urv.cat}
}
\date{ }
\begin{document}
\maketitle

\begin{abstract}
A vertex $v\in V(G)$ is said to distinguish two vertices $x,y\in V(G)$ of a nontrivial connected graph $G$ if the distance from $v$ to $x$ is different from  the distance from $v$ to $y$.
 A set $S\subset V(G)$ is a \emph{local metric generator} for $G$ if every two adjacent vertices of $G$ are distinguished by some vertex
of $S$. A local metric generator with the minimum cardinality is called a \emph{local metric
basis} for $G$ and its cardinality, the \emph{local metric dimension} of G.
In this paper we study the problem of finding exact values  for the local metric dimension of corona product of graphs.
\end{abstract}

{\it Keywords:} Metric generator; metric dimension; local metric set; local metric dimension, corona product graph.

\section{Introduction}

A generator of a metric space  is a set $S$ of points in the space  with the property that every point of the space is uniquely determined by the distances from the elements of $S$. Given a simple and connected graph $G=(V,E)$, we consider the function $d_G:V\times V\rightarrow \mathbb{R}^+$, where $d_G(x,y)$ is the length of a shortest path between $u$ and $v$. Clearly, $(V,d_G)$ is a metric space, \textit{i.e.}, $d_G$ satisfies $d_G(x,x)=0$  for all $x\in V$, $d_G(x,y)=d_G(y,x)$  for all $x,y \in V$ and $d_G(x,y)\le d_G(x,z)+d_G(z,y)$  for all $x,y,z\in V$. A vertex $v\in V$ is said to distinguish two vertices $x$ and $y$ if $d_G(v,x)\ne d_G(v,y)$.
A set $S\subset V$ is said to be a \emph{metric generator} for $G$ if any pair of vertices of $G$ is
distinguished by some element of $S$. A  metric generator with the minimum cardinality is called a \emph{metric basis}, and
its cardinality the \emph{metric dimension} of $G$, denoted by $\dim(G)$.

Motivated by the problem of uniquely determining the location of an intruder in a network, the concept of metric
dimension of a graph was introduced by Slater in \cite{Slater1975}, where the metric generators were called \emph{locating sets}. The concept of metric dimension of a graph was also introduced by Harary and Melter in \cite{Harary1976}, where metric generators were called \emph{resolving sets}. Applications
of this invariant to the navigation of robots in networks are discussed in \cite{Khuller1996} and applications to chemistry in \cite{Johnson1993,Johnson1998}.  This invariant was studied further in a number
of other papers including, for instance \cite{Bailey2011, Caceres2007, Chartrand2000, Feng20121266, Guo2012raey, Haynes2006, Melter1984, Saenpholphat2004, Yero2011}.  Several variations of metric generators including resolving dominating sets \cite{Brigham2003}, independent resolving sets \cite{Chartrand2003}, local metric sets \cite{Okamoto2010}, strong resolving sets \cite{Sebo2004}, etc. have since been introduced and studied.

In this article we are interested in the study  of local metric generators, also called local metric sets \cite{Okamoto2010}. A set $S$ of vertices
in a connected graph $G$ is a \emph{local metric generator} for $G$ if every two adjacent vertices of $G$ are distinguished by some vertex
of $S$. A local metric generator with the minimum cardinality is called a \emph{local metric
basis} for $G$ and its cardinality, the \emph{local metric dimension} of G, is  denoted by $\dim_l(G)$.
The following main results were obtained in \cite{Okamoto2010}.

  \begin{theorem}{\rm \cite{Okamoto2010}} \label{The1Zhang} Let $G$ be a nontrivial connected graph of order $n$. Then
   $\dim_l(G)=n-1$ if and only if $G$ is  complete, and $\dim_l(G)=1$ if and only if $G$ is bipartite.
\end{theorem}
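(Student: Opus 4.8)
The plan is to treat the two biconditionals separately, since they concern different extremal values. For the characterization that $dim_l(G)=1$ if and only if $G$ is bipartite, the key is the parity of distances from a fixed vertex. First I would record that $dim_l(G)\ge 1$ always, since a nontrivial connected graph has an edge and a local metric generator cannot be empty. For the ``if'' direction, suppose $G$ is bipartite with parts $A,B$ and fix any $v$; every shortest path between two vertices of the same part has even length and between different parts has odd length, so the parity of $d_G(v,x)$ records whether $x$ lies in the same part as $v$. Any edge $xy$ joins the two parts, hence $d_G(v,x)$ and $d_G(v,y)$ have opposite parities and are distinct, so $\{v\}$ is a local metric generator. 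For the converse, suppose $\{v\}$ is a local metric generator and partition $V(G)$ into the distance layers $L_i=\{x:d_G(v,x)=i\}$; since the endpoints of an edge $xy$ satisfy $|d_G(v,x)-d_G(v,y)|\le d_G(x,y)=1$ and must be distinguished by $v$, every edge joins consecutive layers, and colouring each layer by the parity of its index yields a proper $2$-colouring, so $G$ is bipartite.

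For the characterization that $dim_l(G)=n-1$ if and only if $G=K_n$, I would first establish the universal bound $dim_l(G)\le n-1$: for any $v$, the set $V(G)\setminus\{v\}$ is a local metric generator, because every edge has an endpoint $x\ne v$ lying in the set with $d_G(x,x)=0\ne d_G(x,y)$. For the ``if'' direction, I would exploit that in $K_n$ all distinct vertices lie at distance $1$, so a vertex $u$ distinguishes an adjacent pair $\{x,y\}$ only when $u\in\{x,y\}$; consequently a local metric generator must meet every edge, i.e.\ it is a vertex cover, and since the complement of a vertex cover is independent and $K_n$ has no independent set of size $2$, every local metric generator has at least $n-1$ vertices.

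For the remaining and most substantive direction, namely $dim_l(G)=n-1$ implies $G=K_n$, I would argue by contrapositive. Assuming $G\ne K_n$, choose a non-adjacent pair $u,w$ and claim that $S=V(G)\setminus\{u,w\}$ is a local metric generator, which forces $dim_l(G)\le n-2$. Indeed, given any edge $xy$, we cannot have $\{x,y\}=\{u,w\}$ because $uw$ is a non-edge, so at least one endpoint, say $x$, lies in $S$, and then $x$ itself distinguishes $x$ and $y$ via $d_G(x,x)=0\ne d_G(x,y)$. The main obstacle is precisely this reverse direction: one must use non-completeness to delete a suitable pair of vertices without stranding an edge whose two endpoints are both removed, and selecting a non-edge is exactly what guarantees that no such edge exists.
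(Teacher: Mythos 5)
Your proof is correct in all four directions. Note, however, that the paper itself offers no proof of this statement: it is quoted verbatim from the cited reference \cite{Okamoto2010}, so there is no internal argument to compare yours against. Taken on its own merits, your proposal is a complete and elementary proof: the parity-of-distance-layers argument (edges must join consecutive layers, which yields a proper $2$-colouring) correctly settles the bipartite characterization in both directions; the observation that in $K_n$ a vertex distinguishes an adjacent pair only if it is one of the two endpoints, so that any local metric generator is a vertex cover and hence has at least $n-1$ vertices, correctly gives the lower bound for the complete graph; and the contrapositive step, deleting both endpoints of a non-edge and checking that every edge still retains an endpoint in the remaining set, is sound and correctly uses non-completeness exactly where it is needed. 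One small point worth making explicit if you write this up: when $G\ne K_n$ and $G$ is connected and nontrivial, necessarily $n\ge 3$, so the set $V(G)\setminus\{u,w\}$ is nonempty; this is implicit in your argument but deserves a sentence.
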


The clique number $\omega(G)$ of a graph $G$ is the order of a largest complete subgraph in $G$.

  \begin{theorem}{\rm \cite{Okamoto2010}} \label{The1Zhang2} Let $G$ be connected graph of order $n$. Then
   $\dim_l(G)=n-2$ if and only if $\omega(G)=n-1$.
\end{theorem}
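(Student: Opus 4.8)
The plan is to prove the two implications separately, treating the ``if'' direction by a direct computation and the ``only if'' direction through the notion of a removable triple.

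First I would prove that $\omega(G)=n-1$ forces $dim_l(G)=n-2$. Since $\omega(G)=n-1<n$, Theorem~\ref{The1Zhang} guarantees that $G$ is not complete, so $G$ consists of a clique $K=\{v_1,\dots,v_{n-1}\}$ together with one more vertex $u$ having at least one neighbour and at least one non-neighbour in $K$. Write $A=N(u)\cap K$ and $B=K\setminus A$, both nonempty. The only relevant distances are that any two vertices of $K$ are at distance $1$, while $d_G(u,x)=1$ for $x\in A$ and $d_G(u,x)=2$ for $x\in B$. The key observation is that a vertex distinguishes two clique vertices lying in the same class ($A$ or $B$) only if it is one of them, whereas $u$ distinguishes $v_i,v_j$ exactly when one lies in $A$ and the other in $B$. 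Hence for any local metric generator $S$ the traces $S\cap A$ and $S\cap B$ must be vertex covers of the complete graphs on $A$ and on $B$, giving $|S\cap A|\ge |A|-1$ and $|S\cap B|\ge |B|-1$; splitting according to whether $u\in S$ then yields $|S|\ge n-2$ in both cases. For the matching upper bound I would verify that $S=(A\setminus\{a_0\})\cup(B\setminus\{b_0\})\cup\{u\}$ is a local metric generator of size $n-2$: edges inside $A$ or inside $B$ are distinguished by an endpoint, cross edges by $u$ (distances $1$ versus $2$), and the edges at $u$ by $u$ itself.

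For the converse it is convenient to call a set $T$ of three vertices \emph{removable} if every edge of $G$ with both endpoints in $T$ is distinguished by some vertex of $V(G)\setminus T$; in that case $V(G)\setminus T$ is a local metric generator, so $dim_l(G)\le n-3$. Assuming $dim_l(G)=n-2$, Theorem~\ref{The1Zhang} gives $G\neq K_n$ and hence $\omega(G)\le n-1$, so the task reduces to showing that $\omega(G)\le n-2$ would produce a removable triple, contradicting $dim_l(G)=n-2$. Now $\omega(G)\le n-2$ says exactly that the non-edges of $G$ do not all pass through a single vertex, so $G$ has either three pairwise non-adjacent vertices or two vertex-disjoint non-edges $\{a,b\}$ and $\{c,d\}$. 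A pairwise non-adjacent triple is removable for free, so the whole issue concentrates on the second alternative, which I may combine with the assumption that $G$ has no independent set of size $3$.

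In that situation I would examine the subgraph induced by $\{a,b,c,d\}$: its only possible edges join $\{a,b\}$ to $\{c,d\}$, so it is a subgraph of $K_{2,2}$, and having no independent triple leaves only two possibilities, namely the induced $2K_2$ with edges $ac,bd$, or a configuration with at least three of the four cross-edges. In the latter case a short check shows that one of $a,b,c,d$ distinguishes every internal edge of the triple formed by the other three, so that triple is removable. The genuinely delicate case — and the step I expect to be the main obstacle — is the induced $2K_2$, where everything turns on true twins. If $a,c$ (or symmetrically $b,d$) fail to satisfy $N[a]=N[c]$, then an outside distinguisher of $a,c$ must exist, for otherwise every vertex outside $\{a,b,c\}$ is equidistant from $a$ and $c$, which forces $N[a]=N[c]$; hence $\{a,b,c\}$ is removable. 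If instead both $N[a]=N[c]$ and $N[b]=N[d]$ hold, no distinguisher can be found among $a,b,c,d$, and I would finish by a cross-resolving argument: a vertex $w$ adjacent to all four lets $d$ and $c$ distinguish the two edges $aw$ and $bw$ of $\{a,b,w\}$; and if no such common neighbour exists, connectivity forces an edge $pq$ joining a neighbour $p$ of $a,c$ to a neighbour $q$ of $b,d$, whereupon $q$ distinguishes the unique edge $ap$ of $\{a,b,p\}$. Either way a removable triple appears, completing the contradiction and the proof.
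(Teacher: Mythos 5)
The paper does not prove this statement at all: it is quoted as a known result from \cite{Okamoto2010}, so there is no in-paper proof to compare against, and your argument has to stand on its own. It does. Your ``if'' direction (clique $K$ plus a vertex $u$ with $A=N(u)\cap K$, $B=K\setminus A$ both nonempty, vertex-cover bounds on $S\cap A$ and $S\cap B$, explicit generator of size $n-2$) is sound, and your ``only if'' direction via removable triples is a correct and rather clean reduction: $\omega(G)\le n-2$ gives either an independent triple (removable for free) or two disjoint non-edges, and your case analysis on the induced subgraph of $\{a,b,c,d\}$, the true-twin dichotomy, and the final common-neighbour/$P$--$Q$-edge split all check out.

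Two spots deserve explicit justification when you write this up. First, in the lower bound of the ``if'' direction, the vertex-cover inequalities alone give only $|S|\ge n-3$ when $u\notin S$; you need the additional observation that a cross pair $a_0\in A$, $b_0\in B$ can be distinguished \emph{only} by $u$, $a_0$ or $b_0$, so that $u\notin S$ forces $A\subseteq S$ or $B\subseteq S$ (otherwise the edge $a_0b_0$ with $a_0,b_0\notin S$ is undistinguished). Second, in the last sub-case, ``connectivity forces an edge $pq$'' is true but not for free: you must first use the no-independent-triple hypothesis to argue that every vertex outside $\{a,b,c,d\}$ is adjacent to $a$ or to $b$, hence (by the twin relations) lies in $P=N(a)\setminus\{c\}$ or in $Q=N(b)\setminus\{d\}$, and that no edges run between $\{a,c\}$ and $Q\cup\{b,d\}$ or between $\{b,d\}$ and $P\cup\{a,c\}$; only then does connectedness of $G$ force an edge between $P$ and $Q$. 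With these two points filled in, your proof is complete and is a perfectly serviceable self-contained replacement for the citation.
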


In this paper we study the local metric dimension of corona product graphs.
We begin by giving some basic concepts and notations. For two adjacent vertices $u$ and $v$ of $G=(V,E)$ we use the notation  $u\sim v$ and for two isomorphic graphs $G $ and $G'$ we use $G\cong G'$. For a  vertex
$v$ of $G$, $N_G(v)$ denotes the set of neighbors that $v$ has in $G$, \textit{i.e.},  $N_G(v)=\{u\in V:\; u\sim v\}$. The set $N_G(v)$ is called the \emph{open neighborhood of} $v$ in $G$  and $N_G[v]=N_G(v)\cup \{v\}$ is called the \emph{closed neighborhood of} $v$ in $G$.  The degree of a vertex $v$ of $G$ will be denoted by $\delta_G(v)$, \textit{i.e.}, $\delta_G(v)=|N_G(v) |$. Given a set $S\subset V$, we denote by $\langle S\rangle_G$ the subgraph of $G$ induced by $S$ and by $N_G(S)=\cup_{v\in S}N_G(v)$ the \emph{open neighborhood of} $S$. In particular, if $S=\{x\}$ we will use the notation $\langle x\rangle$ instead of $\langle \{x\}\rangle$. 

We will use the notation $K_n$, $K_{r,s}$, $C_n$, $N_n$ and $P_n$ for complete graphs,  complete bipartite graphs, cycle graphs, empty graphs and path graphs, respectively.

Let $G$ and $H$ be two graphs of order $n$ and $n_1$, respectively. The corona product $G\odot H$ was defined by Frucht and Harary in \cite{Frucht1970} as the graph obtained from $G$ and $H$ by taking one copy of $G$ and $n$ copies of $H$ and joining by an edge each vertex from the $i^{th}$-copy of $H$ with the $i^{th}$-vertex of $G$. We will denote by $V=\{v_1,v_2,...,v_n\}$ the set of vertices of $G$ and by $H_i=(V_i,E_i)$ the copy of $H$ such that $v_i\sim x$ for every $x\in V_i$. Figure \ref{Fig} shows two examples  of corona product graphs where the factors are non-trivial.

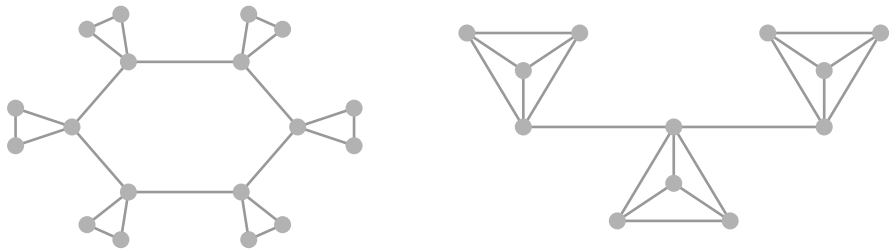
\begin{figure}[h]
\begin{center}
\begin{tikzpicture}
[inner sep=0.7mm, place/.style={circle,draw=black!30,fill=black!30,thick},xx/.style={circle,draw=black!99,fill=black!99,thick},
transition/.style={rectangle,draw=black!50,fill=black!20,thick},line width=1pt,scale=0.5]

\coordinate (X) at (9,0 ); \coordinate (Y) at (13,0 ); \coordinate (Z) at (17,0 );
\coordinate (X1) at (7.5,2.5 ); \coordinate (X2) at (9,1.5 ); \coordinate (X3) at (10.5,2.5 );
\coordinate (Y1) at (11.5,-2.5 ); \coordinate (Y2) at (13,-1.5 ); \coordinate (Y3) at (14.5,-2.5 );
\coordinate (Z1) at (15.5,2.5 ); \coordinate (Z2) at (17,1.5 ); \coordinate (Z3) at (18.5,2.5 );

\coordinate (A1) at (-4.5,0.5);
\coordinate (A2) at (-4.5,-0.5);
\coordinate (A) at (-3,0);
\coordinate (B) at (-1.5,1.732);
\coordinate (B1) at (-2.6,2.6);
\coordinate (B2) at (-1.7,3);
\coordinate (C) at (1.5,1.732);
\coordinate (C1) at (2.6,2.6);
\coordinate (C2) at (1.7,3);
\coordinate (D) at (3,0);
\coordinate (D1) at (4.5,0.5);
\coordinate (D2) at (4.5,-0.5);
\coordinate (E) at (1.5,-1.732);
\coordinate (E1) at (2.6,-2.6);
\coordinate (E2) at (1.7,-3);
\coordinate (F) at (-1.5,-1.732);
\coordinate (F1) at (-2.6,-2.6);
\coordinate (F2) at (-1.7,-3);

\draw[black!40] (X)--(Y)--(Z);
\draw[black!40] (X3)--(X1)--(X)--(X2);
\draw[black!40] (X1)--(X2)--(X3)--(X);

\draw[black!40] (Y3)--(Y1)--(Y)--(Y2);
\draw[black!40] (Y1)--(Y2)--(Y3)--(Y);

\draw[black!40] (Z3)--(Z1)--(Z)--(Z2);
\draw[black!40] (Z1)--(Z2)--(Z3)--(Z);

\draw[black!40] (A)--(B)--(C)--(D) -- (E) -- (F) -- (A);

\draw[black!40] (A1)--(A)--(A2)--(A1);
\draw[black!40] (B1)--(B)--(B2)--(B1);
\draw[black!40] (C1)--(C)--(C2)--(C1);
\draw[black!40] (D1)--(D)--(D2)--(D1);
\draw[black!40] (E1)--(E)--(E2)--(E1);
\draw[black!40] (F1)--(F)--(F2)--(F1);

\node at (X) [place]  {};\node at (X1) [place]  {};\node at (X2) [place]  {};\node at (X3) [place]  {};
\node at (Y) [place]  {};
{};\node at (Y1) [place]  {};\node at (Y2) [place]  {};\node at (Y3) [place]  {};
\node at (Z) [place]  {};
{};\node at (Z1) [place]  {};\node at (Z2) [place]  {};\node at (Z3) [place]  {};
\node at (A) [place]  {};
\node at (A1) [place]  {};
\node at (A2) [place]  {};
\node at (B) [place]  {};
\node at (B1) [place]  {};
\node at (B2) [place]  {};
\node at (C1) [place]  {};
\node at (C2) [place]  {};
\node at (C) [place]  {};
\node at (D1) [place]  {};
\node at (D2) [place]  {};
\node at (D) [place]  {};
\node at (E) [place]  {};
\node at (E1) [place]  {};
\node at (E2) [place]  {};
\node at (F) [place]  {};
\node at (F1) [place]  {};
\node at (F2) [place]  {};
\end{tikzpicture}
\end{center}
\caption{From the left, we show the corona graphs $C_6 \odot K_{2}$ and $P_3\odot K_3$.}\label{Fig}
\end{figure}


The join $G+H$ is defined as the graph obtained from disjoint graphs $G$ and $H$ by taking one copy of $G$ and one copy of $H$ and joining by an edge each vertex of $G$ with each vertex of $H$. Notice that the corona graph $K_1\odot H$ is isomorphic to the join graph $K_1+H$. For instance, the graph $K_1+C_t$ is a wheel graph,  $K_1+K_r\cong K_{r+1}$ and $K_1+N_t$ is isomorphic to a star graph whose central  vertex is the vertex of  $K_1$ and whose $t$ leaves are the vertices of the empty graph $N_t$.   From now on, the vertex of $K_1$ will be denoted by $v$. 

In this work, the remaining definitions will be given the first time that the concept appears in the text.

{The metric dimension and related parameters} have been recently studied for the case of corona graphs. For instance, the metric dimension was studied in \cite{Iswadi} and \cite{Yero2011}, the strong metric dimension was studied in  \cite{Kuziak2013} and the partition dimension was studied in \cite{Rodriguez-Velazquez}. In this article we study  the local metric dimension. The article is organized as follows: In section  \ref{general} we give closed formulae for $\dim_l(G\odot H)$ in terms of $\dim_l(G)$
and $\dim_l(K_1\odot H)$.
Then, we establish lower and upper bounds for $\dim_l(G\odot H)$
by using the orders of $G$ and $H$, and  in Section \ref{extremal} we characterize all graphs when the bounds are attained.
Finally, in Section \ref{radius3} we investigate  the value of $\dim_l(G\odot H)$ when $H$
is a bipartite graph of radius three, and in particular,
we compute $\dim_l(G\odot T)$ when $T$ is a tree.  

\section{General results}\label{general}

To begin with, we consider some straightforward cases.  If  $H$ is an empty graph, then $K_1\odot H$ is a star graph and $\dim_l(K_1\odot H)=1.$ Moreover, if $H$ is a complete graph of order $n$, then $K_1\odot H $ is a complete graph of order $n+1$ and $\dim_l(K_1\odot H)=n.$

\begin{theorem}
Let $G$ be a connected non-trivial graph. For any empty graph $H$, $$\dim_l(G\odot H)=\dim_l(G).$$
\end{theorem}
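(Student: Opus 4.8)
The plan is to prove the two inequalities $dim_l(G\odot H)\le dim_l(G)$ and $dim_l(G\odot H)\ge dim_l(G)$ separately. Before doing so, I would record the distance structure of $G\odot H$ when $H$ is empty. Since $H$ has no edges, the only edges of $G\odot H$ are the edges of $G$ together with the edges $v_i\sim x$ for $x\in V_i$; in particular, no two vertices of a single copy $V_i$ and no two vertices of distinct copies are adjacent. Because the copies communicate only through the vertices of $G$, for any $v_a,v_b\in V$ one has $d_{G\odot H}(v_a,v_b)=d_G(v_a,v_b)$, while for $x\in V_i$ and any $v_a\in V$ one has $d_{G\odot H}(x,v_a)=1+d_G(v_i,v_a)$. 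These two identities are the engine of the whole argument.

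For the upper bound, I would take a local metric basis $S$ of $G$ and show that the same set $S\subset V$ is a local metric generator for $G\odot H$. There are only two kinds of adjacent pairs to check. If $v_a\sim v_b$ is an edge of $G$, then some $s\in S$ satisfies $d_G(s,v_a)\ne d_G(s,v_b)$, and by the first identity this same $s$ distinguishes the pair in $G\odot H$. If $v_i\sim x$ with $x\in V_i$, then for every $s\in S$ the second identity gives $d_{G\odot H}(s,x)=1+d_{G\odot H}(s,v_i)\ne d_{G\odot H}(s,v_i)$ (the value $s=v_i$ being covered by the same formula), so any single vertex of $S$ distinguishes this pair; this is where nontriviality of $G$ is used, to guarantee $S\ne\emptyset$. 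Hence $S$ works and $dim_l(G\odot H)\le dim_l(G)$.

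For the lower bound, I would start from a local metric basis $W$ of $G\odot H$ and project it into $G$ by the map $f$ sending $v_i\mapsto v_i$ and every $x\in V_i\mapsto v_i$, setting $S=f(W)$, so that $|S|\le|W|$. The point is that, for distinguishing vertices of $G$, a copy-vertex $x\in V_i$ behaves exactly like $v_i$: by the second identity, $d_{G\odot H}(x,v_a)\ne d_{G\odot H}(x,v_b)$ holds if and only if $d_G(v_i,v_a)\ne d_G(v_i,v_b)$. Thus for any edge $v_a\sim v_b$ of $G$, the vertex $w\in W$ that distinguishes it in $G\odot H$ forces $f(w)\in S$ to distinguish it in $G$, so $S$ is a local metric generator for $G$ and $dim_l(G)\le|S|\le|W|=dim_l(G\odot H)$. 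I expect the main obstacle to be purely bookkeeping: carefully verifying the two distance identities (in particular that shortest paths never gain by detouring through a copy $H_k$) and checking the degenerate subcases $w\in\{v_a,v_b\}$ or $w\in V_a\cup V_b$ in the projection step. Combining the two inequalities yields the claim.
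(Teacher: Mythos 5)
Your proof is correct and takes essentially the same approach as the paper's: a local metric basis of $G$ is shown to remain a local metric generator for $G\odot H$, and conversely any local metric generator of $G\odot H$ is pushed down to $G$ by replacing each copy-vertex $x\in V_i$ with $v_i$ (the paper phrases this second step as a contradiction against a hypothetical smaller basis rather than as a direct projection, but the idea is identical). The distance identities you make explicit are exactly the facts the paper uses implicitly.
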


\begin{proof}
Let $B$ be a local metric basis for $G$. Since in $G\odot H$ every pair of adjacent vertices of $G$ is distinguished by some vertex of $B$ and every vertex  of $B$ distinguishes every pair of adjacent vertices composed by one vertex of $G$ and one vertex of $H$, we conclude that $B$ is a local metric generator for $G\odot H$.

Now, suppose that $A$ is a local metric basis for $G\odot H$ such that $|A|<|B|$. Since $H$ is an empty graph, if there exists $x\in A\cap V_i$, for some $i$, then the pairs of vertices of $G\odot H$ which are distinguished by $x$ can be distinguished also by $v_i$.
So, we consider the set $A'$ obtained from $A$ by replacing by $v_i$ each vertex $x\in A\cap V_i$, where $i\in \{1,...,n\}$.
Thus,  $A'$ is a local metric generator for $G$ and $\vert  A'\vert \le |A|<|B|=\dim_l(G)$, which is a contradiction. Therefore, $B$ is a local metric basis for $G\odot H$.
\end{proof}

We present now the main result on the local metric dimension of corona graphs $G\odot H$ for the case where $H$ is a non-empty graph.

\begin{theorem} \label{mainTheorem}
Let $H$ be a non-empty graph. The following assertions hold.
\begin{enumerate}[{\rm (i)}]
\item  If the vertex of $K_1$ does not belong to any local metric basis for $K_1+H$, then for any connected graph $G$ of order $n$,
$$\dim_l(G\odot H)=n\cdot \dim_l(K_1+H).$$

\item   If the vertex of $K_1$  belongs to a local metric basis for $K_1+H$, then for any connected graph $G$ of order $n\ge 2$,
$$\dim_l(G\odot H)=n  (\dim_l(K_1+H)-1).$$
\end{enumerate}
\end{theorem}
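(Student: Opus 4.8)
The plan is to reduce both formulas to a single quantity attached to $H$: the minimum number $\alpha$ of vertices of $H$ that suffice to distinguish, inside $K_1+H$, every pair of adjacent vertices of $H$. Both claimed values turn out to equal $n\alpha$, so morally this is one argument with a case-dependent accounting of the apex vertex $v$; the two cases differ only in how $\alpha$ relates to $dim_l(K_1+H)$.

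First I would record the elementary distances in $G\odot H$: for $x\in V_i$, $y\in V_j$ with $i\ne j$ one has $d_{G\odot H}(x,y)=d_G(v_i,v_j)+2$ and $d_{G\odot H}(v_i,y)=d_G(v_i,v_j)+1$, while for vertices inside a single copy the distances coincide with those of $K_1+H$ (any two vertices of $V_i$ being at distance $1$ or $2$ through $v_i$). The crucial consequence is a \emph{localization} step: if $x,y\in V_i$ are adjacent, then only vertices of $V_i$ can distinguish them, since $v_i$ is at distance $1$ from both and every vertex lying in another copy, as well as every $v_k$ with $k\ne i$, is equidistant from all of $V_i$. Hence for any local metric generator $S$ of $G\odot H$ the trace $S\cap V_i$, read inside $K_1+H$, must distinguish every adjacent pair of $H$; as the $V_i$ are pairwise disjoint this already yields $|S|\ge\sum_i|S\cap V_i|$.

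Next I would relate ``$W\subseteq V(H)$ distinguishes every adjacent pair of $H$'' to $dim_l(K_1+H)$ through two observations in $K_1+H$: the apex $v$ distinguishes every pair $(v,x)$ but \emph{no} adjacent pair of $H$ (both endpoints being at distance $1$ from $v$). Therefore, if $W$ distinguishes all adjacent pairs of $H$, then $W\cup\{v\}$ is a local metric generator of $K_1+H$, so $|W|\ge dim_l(K_1+H)-1$; applying this to $W=S\cap V_i$ gives the lower bound $|S|\ge n\,(dim_l(K_1+H)-1)$ needed for (ii). For (i) I would sharpen it: if $|S\cap V_i|\le dim_l(K_1+H)-1$, then $(S\cap V_i)\cup\{v\}$ is a local metric generator of $K_1+H$ of cardinality at most $dim_l(K_1+H)$, hence a \emph{basis} containing $v$, contradicting the hypothesis of (i); thus $|S\cap V_i|\ge dim_l(K_1+H)$ and $|S|\ge n\cdot dim_l(K_1+H)$.

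For the matching upper bounds I would build $S$ copy by copy. In case (i) the hypothesis supplies a local metric basis $B$ of $K_1+H$ with $v\notin B$; placing a copy $B_i\subseteq V_i$ in each copy and taking $S=\bigcup_iB_i$ gives $|S|=n\cdot dim_l(K_1+H)$, and it is a generator because $B_i$ resolves inside copy $i$ both the adjacent pairs of $H_i$ and the pairs $(v_i,x)$ (the distance vectors matching those of $B$ in $K_1+H$, using $d(w,v_i)=1=d_{K_1+H}(w,v)$), while any $w\in B_i$ resolves each $G$-edge at $v_i$ since $d(w,v_i)=1$ and $d(w,v_j)=2$ when $v_i\sim v_j$. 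In case (ii) I take in each copy a minimum $W_i\subseteq V_i$ distinguishing the adjacent pairs of $H_i$, of size $dim_l(K_1+H)-1$; adjacent pairs of $H_i$ and $G$-edges are resolved as above, and each remaining pair $(v_i,x)$ is resolved by any vertex of some $W_j$ with $j\ne i$ (here the hypotheses $n\ge2$ and $W_j\ne\emptyset$ are exactly what is used), whence $|S|=n\,(dim_l(K_1+H)-1)$. The main obstacle, and the precise place where the two cases diverge, is the treatment of the pairs $(v_i,x)$: they are the only adjacent pairs of a copy that \emph{can} be resolved from outside it, so whether paying for them is unavoidable is governed entirely by whether $v$ lies in a basis of $K_1+H$; this forces the finer ``$(S\cap V_i)\cup\{v\}$ is a basis containing $v$'' argument for the lower bound of (i) and the use of a foreign copy (hence $n\ge2$) for the upper bound of (ii), the remaining distance bookkeeping being routine.
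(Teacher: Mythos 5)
Your proposal is correct and follows essentially the same route as the paper: the localization step (only vertices of $V_i$ can distinguish two adjacent vertices inside $V_i$), the lower bounds obtained by observing that $(S\cap V_i)\cup\{v_i\}$ is a local metric generator for $\langle v_i\rangle+H_i$ and then invoking the hypothesis on whether $v$ lies in a basis, and the copy-by-copy upper-bound construction from bases of $K_1+H$ (dropping the apex in case (ii), with pairs $(v_i,x)$ and $G$-edges resolved from a foreign copy) are exactly the paper's arguments, merely repackaged through your invariant $\alpha$. The only real difference is cosmetic: in case (i) you resolve the pairs $(v_i,x)$ inside the copy itself, which incidentally handles $n=1$ uniformly, whereas the paper resolves them via a foreign copy $S_j'$, $j\ne i$, after first disposing of $n=1$ through the isomorphism $G\odot H\cong K_1+H$.
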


\begin{proof}
If $n=1$, then $G\odot H \cong  K_1+H$ and we are done. We consider $n\ge 2.$  Let $S_i$ be a local metric basis for $\langle v_i \rangle +H_i$ and
let $S_i'=S_i-\{v_i\}$.
Note that $S_i'\ne \emptyset$ because $H_i$ is a non-empty graph   and $v_i$ does not distinguish any pair of adjacent vertices belonging to $V_i$. In order to show that $X=\cup_{i=1}^n S_i'$ is a local metric generator for $G\odot H$ we differentiate the following cases for two adjacent vertices $x,y$.
\\
\\
\noindent Case 1. $x,y\in V_i$. Since $v_i$ does not distinguish  $x,y$, there exists $u\in S_i'$ such that $d_{G\odot H}(x,u)=d_{\langle v_i \rangle +H_i}(x,u)\ne d_{\langle v_i \rangle +H_i}(y,u)= d_{G\odot H}(y,u)$.
\\
\\
\noindent Case 2. $x\in V_i$ and $y=v_i$.  For $u\in S_j'$, $j\ne i$, we have $d_{G\odot H}(x,u)=1+d_{G\odot H}(y,u)> d_{G\odot H}(y,u)$.
\\
\\
\noindent Case 3. $x=v_i$ and $y=v_j$. For $u\in S_j'$,  we have $d_{G\odot H}(x,u)=2=d_{G\odot H}(x,y)+1>1= d_{G\odot H}(y,u)$.

Hence, $X$ is a local metric generator for $G\odot H$.

Now we shall prove (i).  If the vertex of $K_1$ does not belong to any local metric basis for $K_1+H$, then  $v_i\not \in S_i$ for every $i\in \{1,...,n\}$ and, as a consequence,
$$\dim_l(G\odot H)\le |X|=\sum_{i=1}^n |S'_i|=\sum_{i=1}^n \dim_l(\langle v_i \rangle +H_i)=n\cdot \dim_l(K_1 +H).$$
Now we need to prove that $\dim_l(G\odot H)\ge n\cdot \dim_l(K_1 +H) .$ In order to do this, let $W$ be  a local metric basis for $G\odot H$ and let $W_i=V_i\cap W$.
Consider two adjacent vertices $x,y\in V_i-W_i$. Since no  vertex $a\in W-W_i$ distinguishes the pair $x,y$, there exists $u\in W_i$ such that 
$d_{\langle v_i\rangle+ H_i}(x,u)=d_{G\odot H}(x,u)\ne d_{G\odot H}(y,u)=d_{\langle v_i\rangle+ H_i}(y,u)$. So we conclude that $W_i\cup \{v_i\}$ is a local 
metric generator for $\langle v_i\rangle +H_i$. Now, since $v_i$ does not belong to any local metric basis for $\langle v_i \rangle +H_i$, we have that $\vert W_i\vert +1=\vert W_i\cup \{v_i\}\vert>\dim_l(\langle v_i \rangle +H_i)$ and, as a consequence, $|W_i|\ge \dim_l(\langle v_i \rangle +H_i)$. Therefore,   $$\dim_l(G\odot H)= |W|\ge \sum_{i=1}^n |W_i|\ge \sum_{i=1}^n \dim_l(\langle v_i \rangle +H_i)=n\cdot \dim_l(K_1 +H),$$ and the proof of (i) is complete.

Finally,  we shall prove (ii). If the vertex of $K_1$  belongs to a local metric basis for $K_1+H$, then we assume that $v_i\in S_i$ for every $i\in \{1,...,n\}$. Suppose that there exists $B $ such that $B$ is a local metric basis for $G\odot H$ and $|B|<|X|$. In such a case, there exists $i\in \{1,...,n\}$ such that the set $B_i=B\cap V_i $ satisfies $|B_i|<|S_i'|$. Now, since no vertex of $B-B_i$ distinguishes the pairs of adjacent vertices belonging to $V_i$, the set $B_i\cup \{v_i\}$ must be a local metric generator for $\langle v_i \rangle +H_i$. So, $\dim_l(\langle v_i \rangle +H_i)\le  |B_i|+1<\ |S_i'|+1=|S_i|=\dim_l(\langle v_i \rangle +H_i)$, which is a contradiction. Hence, $X$ is a local metric basis for $G\odot H$ and, as a consequence,
$$\dim_l(G\odot H)=|X|=\sum_{i=1}^n |S_i'|=\sum_{i=1}^n (\dim_l(\langle v_i \rangle +H_i)-1)=n( \dim_l(K_1 +H)-1).$$
 The proof of (ii) is now complete.
\end{proof}

As a direct consequence of Theorem \ref{mainTheorem} we obtain the following results.

\begin{corollary}\label{CorollaryExamplesCompleteCompletebipartitePnCn}
The following assertions hold for any connected graph $G$ of order $n\ge 2$.
\begin{enumerate}[{\rm (i)}]
\item For any integer $t\ge 2$, $\dim_l(G\odot K_t)=n(t-1).$
\item For any positive integers $r$ and $s$, $\dim_l(G\odot K_{r,s})=n.$
\item Let $t\ge 4$ be an integer. If $t\equiv 1 (4)$, then $\dim_l(G\odot P_t)=n \left\lfloor\frac{t}{4}\right\rfloor$ and if   $t\not \equiv 1 (4)$, then $\dim_l(G\odot P_t)=n \left\lceil\frac{t}{4}\right\rceil.$
\item For any integer $t\ge 4$, $\dim_l(G\odot C_t)=n \left\lceil\frac{t}{4}\right\rceil$.
\end{enumerate}
\end{corollary}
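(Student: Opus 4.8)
The plan is to deduce each part from Theorem \ref{mainTheorem} by carrying out, for the relevant graph $H$, exactly two computations: the value of $dim_l(K_1+H)$, and whether the vertex $v$ of $K_1$ lies in some local metric basis of $K_1+H$. The latter decides which of the two formulas of Theorem \ref{mainTheorem} to invoke, after which the claimed value of $dim_l(G\odot H)$ follows by substitution.

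The two complete families are immediate. Since $K_1+K_t\cong K_{t+1}$, Theorem \ref{The1Zhang} gives $dim_l(K_1+K_t)=t$, and by the vertex-transitivity of $K_{t+1}$ the vertex $v$ lies in a local metric basis; hence part (ii) of Theorem \ref{mainTheorem} applies and $dim_l(G\odot K_t)=n(t-1)$, proving (i). For $K_{r,s}$ the join $K_1+K_{r,s}$ contains a triangle formed by $v$ and one vertex of each part, so it is not bipartite and $dim_l(K_1+K_{r,s})\ge 2$ by Theorem \ref{The1Zhang}. Choosing a vertex $a$ in one of the parts, a direct check shows that $\{v,a\}$ distinguishes every pair of adjacent vertices, whence $dim_l(K_1+K_{r,s})=2$ with $v$ in this basis; part (ii) then yields $dim_l(G\odot K_{r,s})=n(2-1)=n$, proving (ii).

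For the path and the cycle I would first record that the apex $v$ is adjacent to all remaining vertices, so that all distances collapse: writing $u_1,\dots,u_t$ for the vertices of $P_t$ (resp.\ $C_t$), one has $d(u_i,u_j)=1$ when $u_i,u_j$ are consecutive and $d(u_i,u_j)=2$ otherwise, while $d(v,u_i)=1$ for all $i$. Consequently $v$ distinguishes no edge of the path (cycle), and a vertex $u_k$ distinguishes the edge $\{u_i,u_{i+1}\}$ precisely when $k\in\{i-1,i,i+1,i+2\}$; thus each chosen $u_k$ covers a block of four consecutive (cyclically, for $C_t$) edges. Distinguishing all path/cycle edges therefore becomes a covering problem, and since each vertex covers at most four edges while a placement of vertices roughly four apart attains the bound, the minimum number of path/cycle vertices needed is $\lceil\frac{t-1}{4}\rceil$ for $P_t$ and $\lceil\frac{t}{4}\rceil$ for $C_t$. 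The only adjacent pairs not yet accounted for are the $\{v,u_i\}$: the apex settles all of them, whereas a vertex $u_k$ settles $\{v,u_i\}$ exactly when $u_k$ is non-adjacent to $u_i$.

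The delicate point, which I expect to be the main obstacle, is to convert these covering numbers into the exact values $dim_l(K_1+P_t)$ and $dim_l(K_1+C_t)$ and to decide the membership of $v$ in a basis. Writing $c$ for the covering number above, any generator containing $v$ needs a full edge-cover besides $v$ and so has size at least $c+1$, while a generator avoiding $v$ must both cover all edges and settle every pair $\{v,u_i\}$. When $c\ge 3$, any minimum edge-cover already settles every $\{v,u_i\}$, since $u_i$ has at most two neighbours among $u_1,\dots,u_t$ and so cannot be adjacent to all (at least three) cover vertices; and when $c=2$ one can still choose the two cover vertices to have no common neighbour, again settling all such pairs. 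In both situations $dim_l=c$ is realised without $v$, part (i) applies, and $dim_l(G\odot H)=nc$. The exceptional cases are precisely those with $c=1$, namely $P_t$ for $t\in\{4,5\}$ and $C_4$: here the single covering vertex is adjacent to its two path/cycle neighbours and fails to settle the corresponding pairs $\{v,u_i\}$, so one is forced to adjoin $v$, giving $dim_l=c+1=2$ with $v$ in a basis, whence part (ii) applies and $dim_l(G\odot H)=n(2-1)=n$. A final check then confirms the announced value: for $C_t$ it is $n\lceil\frac{t}{4}\rceil$ throughout, and for $P_t$ it is $n\lceil\frac{t-1}{4}\rceil$, which equals $n\lfloor\frac{t}{4}\rfloor$ when $t\equiv 1\pmod 4$ and $n\lceil\frac{t}{4}\rceil$ otherwise, matching (iii) and (iv). The bookkeeping at the small values of $t$ and the rigorous justification of the covering lower bound are where most of the care is needed.
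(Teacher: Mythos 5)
Your proposal is correct and takes essentially the same route as the paper: parts (i)--(ii) are argued identically, and for (iii)--(iv) your observation that $u_k$ distinguishes exactly the path/cycle edges $\{u_i,u_{i+1}\}$ with $k\in\{i-1,i,i+1,i+2\}$ is precisely the paper's remark that every subgraph isomorphic to $P_4$ must meet any local metric basis of $K_1+P_t$ (resp.\ $K_1+C_t$), after which both arguments reduce to the same covering count and to deciding whether the apex lies in some basis, the formulas then following from Theorem \ref{mainTheorem}. One harmless imprecision: in the exceptional cases ($P_4$, $P_5$, $C_4$) you say one is ``forced to adjoin $v$,'' which is not literally true (e.g., two adjacent cycle vertices form a local metric basis of $K_1+C_4$ avoiding the apex), but Theorem \ref{mainTheorem} (ii) only requires that $v$ belong to \emph{some} basis, which your generator $\{v,u_k\}$ of size $2=dim_l(K_1+H)$ exhibits --- and the paper makes the same overstatement at this point.
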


\begin{proof}
\begin{enumerate}[{\rm (i)}]
\item  If $H\cong K_{t}$, then $K_{1}+K_{t}\cong K_{t+1}$ and the vertex of $K_{1}$ can
belong to a local metric basis for $K_{1}+K_{t}$. Thus,
$$
\dim_{l}\left( G\odot K_{t}\right) =n\cdot \left( \dim_{l}\left(
K_{t+1}\right) -1\right) =n\cdot (t-1).
$$
\item If $H=(U_1\cup U_2,E)\cong K_{r,s}$ then for every $a\in U_1$ (or $a\in U_2$) the set $\{a,v\}$ is a local metric basis for $\langle v\rangle+H$.  Therefore,
$$
\dim_{l}\left( G\odot K_{r,s}\right) =n\cdot \left( \dim_{l}\left(
K_{1}+K_{r,s}\right) -1\right) =n.
$$
\item Notice that a set  $B$ is a local metric basis for $K_{1}+P_{t}$ if and only if for every pair of adjacent vertices $x,y\in V( P_t)$,  vertex $x$ is adjacent to an element of $B$ or vertex $y$ is adjacent to an element of $B$. Thus, for any subgraph $H'$ of $P_t$ isomorphic to $P_4$,  we have $ B\cap V(H')\ne \emptyset$. With this observation in mind, we consider the following two cases.

Case 1. $4\le t\le 5$.
In this case we have that $\dim_l(\langle v\rangle+P_{t})=2$ and $v$ belongs to any local metric basis. Thus,  $\dim_{l}\left( G\odot P_{t}\right)=n=n\left\lfloor \dfrac{t}{4}\right\rfloor$.

Case 2. $t\ge 6$.  For $t=4k+r$, where $0\le r\le 3$, we obtain
\begin{equation}
\dim_{l}\left( K_{1}+P_{t}\right) =\left\{
\begin{array}{c}
k ,\text{ if } r=0\text{ or }r=1 \\
\\
k+1,\text{ if }r=2\text{ or }r=3%
\end{array}%
\right.
\end{equation}%
Therefore, since in this case vertex $v$ does not belong to any local metric basis for $\langle v\rangle +P_{t}$,  we obtain
\begin{equation*}
\dim_{l}\left( G\odot P_{t}\right) =n\cdot \dim_{l}\left(  K_{1}+P_{t}\right) =\left\{
\begin{array}{c}
n\cdot \left\lfloor \dfrac{t}{4}\right\rfloor, \text{  if }t\equiv 1(4) \\
\\
n\cdot \left\lceil \dfrac{t}{4}\right\rceil ,\text{ if }t\not\equiv 1(4).
\end{array}%
\right.
\end{equation*}


\item If $4\le t\le 5$, then $\dim_l(\langle v\rangle+C_{t})=2$.
Since $v$ belongs to any local metric  basis for $\langle v\rangle+C_{4}$ and $v$ does not belong to any local metric basis for $\langle v\rangle+C_{5}$, we have $\dim_l(G \odot C_4)=n$ and $\dim_l(G \odot C_5)=2n=n\left\lceil \dfrac{5}{4}\right\rceil$.

Now we consider the case where $t\ge 6$. As in the proof of (iii), for any local metric basis $B$ of $\langle v\rangle +C_{t}$ and any subgraph $H'$ of $C_t$, isomorphic to $P_4$,  we have $ B\cap V(H')\ne \emptyset$. Hence,  for $t=4k+r$, where $0\le r\le 3$, we deduce
\begin{equation}
\dim_{l}\left( K_{1}+C_{t}\right) =\left\{
\begin{array}{c}
k,\text{ if }r=0 \\
\\
k+1,\text{ otherwise.}%
\end{array}%
\right.
\end{equation}%
 Then, since for $t\ge 6$ vertex $v$ does not belong to any local metric basis for $\langle v\rangle +C_{t}$,
  $\displaystyle \dim_{l}\left( G\odot C_{t}\right) =n\cdot \dim_{l} \left( K_{1}+C_{t}\right) =n\cdot \left\lceil \dfrac{t}{4}\right\rceil
.$
\end{enumerate}

\end{proof}

{
Since any metric generator is a local metric generator,  the local metric dimension of a graph $G$ is at most equal to the metric dimension of $G$, \textit{i.e.}, $\dim_l(G)\le \dim(G)$. For instance, for the complete graph of order $n\ge 2$,  $\dim_l(K_n)= \dim(K_n)=n-1$, and for any bipartite graph $G$, different from a path, $\dim_l(G)=1< \dim(G)$.  As an illustrative  example where the local metric dimension can be significantly smaller than the metric dimension, we can take the complete bipartite graph $K_{r,s}$ of order $r+s\ge 4$, where  $\dim_l(K_{r,s})=1< r+s - 2= \dim(K_{r,s})$.
 Similar examples can be derived for corona graphs. For instance, it was shown in \cite{Yero2011} that for any connected  graph $G$ of order $n\ge 2$ and any integers $r\ge 2$ and $s\ge 1$ ($r+s\ge 4$),
 $\dim(G\odot K_r) = n(r-1)$ and 
  $\dim (G\odot K_{r,s})=n(r+s-2).$ Thus, according to
 Corollary \ref{CorollaryExamplesCompleteCompletebipartitePnCn} (i) and (ii),
 $\dim_l(G\odot K_r) =n(r-1)= \dim (G\odot K_r)$ and 
  $\dim_l(G\odot K_{r,s}) =n< n(r+s-2)=\dim (G\odot K_{r,s}).$}

\begin{corollary}
For any connected graph $H$ and any connected graph $G$ of order $n\ge 2$,
$$\dim_l(G\odot H)\ge n\cdot \dim_l(H).$$
\end{corollary}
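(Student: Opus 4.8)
The plan is to reduce the statement to a comparison between $dim_l(K_1+H)$ and $dim_l(H)$, and then feed that comparison into the two cases of Theorem \ref{mainTheorem}. Since $H$ is connected, for order at least two it is non-empty and the main theorem applies (the single-vertex case is trivial). Recall that in $K_1+H$ the vertex $v$ is adjacent to every vertex of $H$, so for any $x,y\in V(H)$ we have $d_{K_1+H}(x,y)=\min\{d_H(x,y),2\}$, while $d_{K_1+H}(v,z)=1$ for every $z\in V(H)$. The central observation I would establish first is the following: for every local metric generator $S$ of $K_1+H$, the set $S\cap V(H)$ is a local metric generator for $H$.

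To prove this observation I would take any two adjacent vertices $x,y$ of $H$; they are also adjacent in $K_1+H$, so some $w\in S$ distinguishes them there. Because $d_{K_1+H}(v,x)=d_{K_1+H}(v,y)=1$, the vertex $v$ can never distinguish adjacent vertices of $H$, whence $w\in S\cap V(H)$. It then remains to transfer this distinction back to $H$: from $\min\{d_H(x,w),2\}\ne\min\{d_H(y,w),2\}$ at least one of the two clamped values is at most $1$, and the corresponding distance in $H$ equals it and is strictly smaller than the other, so $d_H(x,w)\ne d_H(y,w)$. Hence $S\cap V(H)$ distinguishes every pair of adjacent vertices of $H$, as claimed.

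From this observation the corollary follows quickly in both cases. If the vertex of $K_1$ does not belong to any local metric basis for $K_1+H$, I would pick such a basis $S$; then $S\cap V(H)=S$, so $dim_l(H)\le |S|=dim_l(K_1+H)$, and Theorem \ref{mainTheorem}(i) gives $dim_l(G\odot H)=n\cdot dim_l(K_1+H)\ge n\cdot dim_l(H)$. If instead the vertex of $K_1$ belongs to some local metric basis $S$ for $K_1+H$, then $S\cap V(H)=S\setminus\{v\}$, so $dim_l(H)\le |S|-1=dim_l(K_1+H)-1$, and Theorem \ref{mainTheorem}(ii) yields $dim_l(G\odot H)=n(dim_l(K_1+H)-1)\ge n\cdot dim_l(H)$.

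The only delicate point, and the step I expect to be the main obstacle, is the distance-transfer inside the observation: one must rule out that the clamping of distances at $2$ caused by the universal vertex $v$ manufactures a distinction present in $K_1+H$ but absent in $H$. The short case analysis above resolves this, since a genuine distinction in $K_1+H$ forces one of the two $H$-distances to be $0$ or $1$, a value that is preserved verbatim; thus no distinguishing information is lost when passing from $K_1+H$ to $H$, which is exactly what makes the restriction argument valid.
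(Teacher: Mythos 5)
Your proof is correct and takes essentially the same route as the paper: take a local metric basis for $K_1+H$, note that the vertex $v$ of $K_1$ never distinguishes adjacent vertices of $H$ so that deleting $v$ leaves a local metric generator for $H$, and feed the resulting inequality into the two cases of Theorem \ref{mainTheorem}. The only difference is cosmetic: you spell out the distance-transfer step (a distinction under $d_{K_1+H}=\min\{d_H,2\}$ trivially forces one under $d_H$, since equal $H$-distances would clamp to equal values), which the paper treats as immediate.
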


\begin{proof}
Let $B$ be a local metric basis for $K_1+H$. Since the vertex $v$ of $K_1$ does not distinguish any pair of adjacent vertices $x,y\in V(H)$, $B-\{v\}$ is a local metric generator for $H$. Thus, if $v\in B$, then $\dim_l(K_1+H)-1\ge \dim_l(H)$ and, if $v\not \in B$, then $\dim_l(K_1+H)\ge \dim_l(H)$. Therefore, Theorem \ref{mainTheorem} leads to $\dim_l(G\odot H)\ge n\cdot \dim_l(H).$
\end{proof}

Now we will give some results involving the diameter or the radius of $H$. The \textit{eccentricity} $\epsilon(v)$ of a vertex $v$ in a connected graph $H$ is the maximum graph distance between $v$ and any other vertex $u$ of $H$. So, the \textit{diameter} of $H$ is defined as $$D(H)=\displaystyle\max_{v\in V(H)}\{\epsilon(v)\},$$ while the \textit{radius}  is defined as $$r(H)=\displaystyle\min_{v\in V(H)}\{\epsilon(v)\}.$$

\begin{corollary}
For any graph $H$ of diameter two and any connected graph $G$ of order $n\ge 2$,
$$\dim_l(G\odot H)=n\cdot \dim_l(H).$$
\end{corollary}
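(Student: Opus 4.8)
The corollary is flagged as a consequence of Theorem~\ref{mainTheorem}, so the plan is to compute $dim_l(K_1+H)$ in terms of $dim_l(H)$ and to decide whether the vertex $v$ of $K_1$ lies in a local metric basis for $K_1+H$; the two cases of Theorem~\ref{mainTheorem} will then deliver the result. Write $\alpha=dim_l(H)$. The inequality $dim_l(G\odot H)\ge n\cdot dim_l(H)$ is exactly the previous corollary, so only the reverse inequality (equivalently, the exact value) has to be pinned down.

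First I would sandwich $\alpha\le dim_l(K_1+H)\le \alpha+1$. For the upper bound, take a local metric basis $S$ for $H$ and check that $S\cup\{v\}$ is a local metric generator for $K_1+H$: the vertex $v$ distinguishes every adjacent pair $\{v,x\}$ since $d_{K_1+H}(v,v)=0\ne 1=d_{K_1+H}(v,x)$, while for an edge $xy$ of $H$ some $u\in S$ distinguishes $x,y$ in $H$, and this persists in $K_1+H$. This persistence is exactly where the diameter-two hypothesis enters: for $u,x\in V(H)$ one has $d_{K_1+H}(u,x)=\min\{d_H(u,x),2\}$, and since $\operatorname{diam}(H)=2$ forces $d_H(u,x)\le 2$, the two distances coincide, so $d_H(u,x)\ne d_H(u,y)$ yields $d_{K_1+H}(u,x)\ne d_{K_1+H}(u,y)$. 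For the lower bound, the argument in the proof of the previous corollary shows that $B\setminus\{v\}$ is a local metric generator for $H$ for every local metric basis $B$ of $K_1+H$, whence $dim_l(K_1+H)\ge\alpha$.

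With the sandwich in hand, I would split into the only two possibilities. If $dim_l(K_1+H)=\alpha$, then no local metric basis can contain $v$, because deleting $v$ from such a basis would leave a local metric generator for $H$ of size $\alpha-1<dim_l(H)$; hence part~(i) of Theorem~\ref{mainTheorem} applies and gives $dim_l(G\odot H)=n\cdot dim_l(K_1+H)=n\alpha$. If instead $dim_l(K_1+H)=\alpha+1$, then the set $S\cup\{v\}$ constructed above has cardinality $\alpha+1=dim_l(K_1+H)$ and is therefore a local metric basis containing $v$, so part~(ii) applies and gives $dim_l(G\odot H)=n\bigl(dim_l(K_1+H)-1\bigr)=n\alpha$. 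In both cases $dim_l(G\odot H)=n\cdot dim_l(H)$.

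The main obstacle is the correct alignment of the dichotomy of Theorem~\ref{mainTheorem} with the two possible values of $dim_l(K_1+H)$: a priori one must rule out the scenario $dim_l(K_1+H)=\alpha+1$ together with $v$ lying in no basis, which would produce the wrong value $n(\alpha+1)$. Exhibiting $S\cup\{v\}$ as an explicit optimal basis containing $v$ is what forecloses this scenario, and the diameter-two hypothesis is used precisely to guarantee that a local metric generator for $H$ remains effective on the edges of $H$ inside $K_1+H$.
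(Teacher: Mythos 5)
Your proof is correct, and although it rests on the same two ingredients as the paper's proof --- the observation that diameter two forces $d_{K_1+H}(x,y)=d_H(x,y)$ for all $x,y\in V(H)$, and the dichotomy of Theorem~\ref{mainTheorem} --- you run the case analysis in the reverse direction, and that reversal buys genuine robustness. The paper splits according to whether the vertex $v$ of $K_1$ belongs to some local metric basis for $K_1+H$, and in the negative case asserts that every local metric basis for $H$ is a local metric basis for $K_1+H$ and vice versa; the forward half of that assertion is false in general, because a basis $S$ of $H$ need not distinguish an adjacent pair $v,x$ when $x\notin S$ and $S\subseteq N_H(x)$. Concretely, for $H=C_5$ (which has diameter two) the vertex $v$ lies in no local metric basis of $K_1+C_5$, yet the local metric basis $\{2,5\}=N_{C_5}(1)$ of $C_5$ fails to separate the adjacent pair $v,1$ in $K_1+C_5$, since both of its vertices are adjacent to both $v$ and $1$. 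The pairs of the form $\{v,x\}$ are exactly what the paper's first case glosses over. Your scheme --- first the sandwich $dim_l(H)\le dim_l(K_1+H)\le dim_l(H)+1$, then a split on which value occurs --- never needs that claim: if $dim_l(K_1+H)=dim_l(H)$, any basis containing $v$ would shrink, after deleting $v$, to a too-small local metric generator of $H$, so case (i) of Theorem~\ref{mainTheorem} applies; if $dim_l(K_1+H)=dim_l(H)+1$, your explicit set $S\cup\{v\}$ is an optimal generator, hence a basis containing $v$, so case (ii) applies. Either way the value $n\cdot dim_l(H)$ results, and your closing remark about the one scenario that must be excluded (the larger value of $dim_l(K_1+H)$ with $v$ in no basis) pinpoints precisely the gap in the paper's own write-up; your construction of $S\cup\{v\}$ as an optimal basis is what closes it.
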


\begin{proof}
Since $H$ has diameter two, for every $x,y\in V(H)$ it follows  $d_H(x,y)=d_{K_1+H}(x,y)$. So, if the vertex of $K_1$ does not belong to any local metric basis for $K_1+H$, then every local metric basis for $H$ is a local metric basis for $K_1+H$ and vice versa. Hence, in such a case,  Theorem \ref{mainTheorem} (i) leads to $\dim_l(G\odot H)=n\cdot \dim_l(H).$

Now we suppose that there exists a local metric basis $B$ of $K_1+H$ such that the vertex $v$ of $K_1$ belongs to $B$. Since $v$ does not distinguish any pair of vertices of $H$, $B'=B-\{v\}$ is a local metric generator for $H$. Moreover, if there exists $A \subset V(H)$ such that $|A|<|B'|$ and $A $ is a  local metric basis for $H$, then $A\cup \{v\}$ is a local metric generator for $K_1+H$, which is a contradiction because $|A|+1<|B'|+1=|B|=\dim_l(K_1+H)$. Therefore, $B'$ is a  local metric basis for $H$ and, as a result, $\dim_l(K_1+H)=1+\dim_l(H)$. So, by Theorem \ref{mainTheorem} (ii) we obtain  $\dim_l(G\odot H)=n\cdot \dim_l(H).$
\end{proof}

\begin{lemma}\label{Lemmak1belonsl}
Let $H$ be a graph of radius $r(H)$. If $r(H)\ge 4$ 
then the vertex of $K_1$ does not belong to any local metric basis for $K_1+H.$
\end{lemma}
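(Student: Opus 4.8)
The plan is to show that the vertex $v$ of $K_1$ is always dispensable: from any local metric generator $B$ of $K_1+H$ with $v\in B$ one can delete $v$ and still obtain a local metric generator, so such a $B$ can never be minimum, and hence $v$ lies in no local metric basis. Everything rests on the distance behaviour of the join. Since $v$ is adjacent to every vertex of $H$, we have $d_{K_1+H}(v,x)=1$ for all $x\in V(H)$, so $v$ distinguishes no pair of vertices of $H$; and for $x,y\in V(H)$ one has $d_{K_1+H}(x,y)=\min\{d_H(x,y),2\}$, so all distances inside the copy of $H$ are capped at $2$. I would record these two facts first.

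Next I would isolate the only pairs for which deleting $v$ could matter. Fix a local metric generator $B$ with $v\in B$ and set $B'=B\cap V(H)$, so that $B\setminus\{v\}=B'$. Every pair of adjacent vertices lying inside $H$ is already distinguished by $B'$, because $v$ distinguishes none of them; thus the only adjacent pairs possibly in danger are those of the form $(v,x)$ with $x\in V(H)$. For $u\in V(H)$ one checks that $u$ distinguishes $(v,x)$ exactly when $d_{K_1+H}(u,x)\ne d_{K_1+H}(u,v)=1$, that is, exactly when $u\notin N_H(x)$. Consequently $B'$ handles every pair $(v,x)$ if and only if, for each $x\in V(H)$, the set $B'$ is not contained in $N_H(x)$. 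The whole lemma therefore reduces to the covering claim: for every $x\in V(H)$, $B'\not\subseteq N_H(x)$.

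The hard part, and the only place the hypothesis $r(H)\ge 4$ is used, is proving this covering claim. I would argue by contradiction: suppose $B'\subseteq N_H(x_0)$ for some $x_0$, so that $d_H(u,x_0)=1$ for every $u\in B'$. Because $r(H)$ equals the minimum over all vertices of the largest distance to another vertex, and $r(H)\ge 4$, the vertex $x_0$ admits some $z$ with $d_H(x_0,z)\ge 4$; a shortest path from $x_0$ to $z$ then yields two adjacent vertices $p_3,p_4$ with $d_H(x_0,p_3)=3$ and $d_H(x_0,p_4)=4$. For every $u\in B'$ the triangle inequality gives $d_H(u,p_3)\ge 2$ and $d_H(u,p_4)\ge 2$, whence $d_{K_1+H}(u,p_3)=d_{K_1+H}(u,p_4)=2$. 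Thus no vertex of $B'$ distinguishes the adjacent pair $p_3,p_4$, and since $v$ distinguishes no pair inside $H$ either, $B$ itself fails to distinguish $p_3,p_4$, contradicting that $B$ is a local metric generator.

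Finally I would assemble the pieces: the covering claim shows that $B\setminus\{v\}=B'$ is again a local metric generator of $K_1+H$, with $|B\setminus\{v\}|<|B|$, so no generator containing $v$ can be minimum, which proves that $v$ belongs to no local metric basis. The main obstacle is precisely the production of two \emph{adjacent} vertices both at distance at least $3$ from $x_0$; this is what forces all their join-distances from the vertices of $B'$ to equal $2$, and it is exactly what the bound $r(H)\ge 4$ supplies (radius $3$ would only give adjacent vertices at distances $2$ and $3$, for which a neighbour of $x_0$ in $B'$ could still distinguish the pair, so the threshold is genuinely needed).
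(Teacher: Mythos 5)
Your proof is correct and takes essentially the same route as the paper's: both reduce the problem to the observation that if $v$ were needed then $B\setminus\{v\}$ would be contained in the neighborhood $N_H(u)$ of a single vertex $u$, and both then use $r(H)\ge 4$ to take a shortest path of length $4$ from $u$ and exhibit an adjacent pair (at distances $3$ and $4$ from $u$) that no element of the generator distinguishes. The only cosmetic difference is that you phrase the argument as ``removing $v$ from any generator leaves a generator,'' while the paper argues by contradiction directly from a basis containing $v$.
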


\begin{proof}
Let $B$ be a local metric basis for $K_1+H$. We suppose that the vertex $v$ of $K_1$  belongs to $B$. Note that $v\in B$ if and only if there exists $u\in V(H)-B$ such that $B\subset N_{K_1+H}(u)$.

Now, if $r(H)\ge 4$, then we take  $u'\in V(H)$ such that $d_H(u,u')=4$   and  a shortest path $uu_1u_2u_3u'$. In such a case for every $b\in B-\{v\}$ we will have that $d_{K_1+H}(b,u_3)=d_{K_1+H}(b,u')=2$,  which is a contradiction.  Hence, $v$ does not belong to any local metric basis for $K_1+H$. 
\end{proof}

The converse of Lemma \ref{Lemmak1belonsl} is not true. In Figure \ref{K1+H} we show a graph $H$  of radius three where the vertex of $K_1$ does not belong to any local metric basis for $K_1+H$.

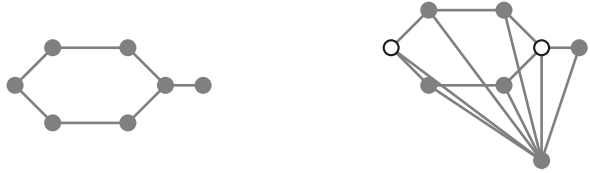
\begin{figure}[h]
\begin{center}
\begin{tikzpicture}
[inner sep=0.7mm, place/.style={circle,draw=black!50,fill=black!50,thick},xx/.style={circle,draw=black!90,fill=white!90,thick},
transition/.style={rectangle,draw=black!50,fill=black!20,thick},line width=1pt,scale=0.5]
\coordinate (A) at (-8,0); \coordinate (B) at (-7,1);
\coordinate (C) at (-7,-1); \coordinate (D) at (-5,-1);
\coordinate (E) at (-5,1); \coordinate (F) at (-4,0);
\coordinate (G) at (-3,0); 
\draw[black!50] (A) -- (B) -- (E) -- (F)--(D)--(C)--(A);
\draw[black!50]  (F) -- (G);
\node at (A) [place]  {};\node at (B) [place]  {};
\node at (C) [place]  {};\node at (D) [place]  {};
\node at (E) [place]  {};
\node at (F) [place]  {};\node at (G) [place]  {};
\coordinate (A1) at (2,1); \coordinate (B1) at (3,2);
\coordinate (C1) at (3,0); \coordinate (D1) at (5,0);
\coordinate (E1) at (5,2); \coordinate (F1) at (6,1);
\coordinate (G1) at (7,1); 
\coordinate (V1) at (6,-2);
\draw[black!50] (A1) -- (V1) -- (B1);\draw[black!50] (E1) -- (V1) -- (F1);\draw[black!50] (C1) -- (V1) -- (D1);
\draw[black!50] (G1) -- (V1);
\draw[black!50] (A1) -- (B1) -- (E1) -- (F1)--(D1)--(C1)--(A1);
\draw[black!50] (F1) -- (G1);
\node at (A1) [xx]  {};\node at (B1) [place]  {};
\node at (C1) [place]  {};\node at (D1) [place]  {};
\node at (E1) [place]  {};
\node at (F1) [xx]  {};\node at (G1) [place]  {};
\node at (V1) [place]  {};
\end{tikzpicture}
\end{center}
\vspace{-0,8cm}
\caption{A graph $H$ and the join graph $K_1+H$.  White vertices form a local metric basis for $K_1+H$. }\label{K1+H}
\end{figure}

The following result is a direct consequence of  Theorem  \ref{mainTheorem} (i) and Lemma \ref{Lemmak1belonsl}.
 \begin{theorem}\label{Th-rM4}
 For any    connected graph $G$ of order $n$ and any graph $H$ of radius $r(H)\ge 4$,
 $$ \dim_l(G\odot H)= n\cdot \dim_l(K_1+H).$$
 \end{theorem}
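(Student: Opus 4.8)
The plan is to obtain this statement directly by combining the two results the paper has just flagged, namely Lemma~\ref{Lemmak1belonsl} and part (i) of Theorem~\ref{mainTheorem}. Since both ingredients are already in hand, the work reduces to verifying that the hypotheses line up and then chaining the two results together; there is no substantive new argument to construct.

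First I would check the compatibility condition that Theorem~\ref{mainTheorem} demands, namely that $H$ is non-empty. This is automatic from the radius hypothesis: a graph with $r(H)\ge 4$ contains a shortest path of length $4$, hence edges, so $H$ cannot be empty. This is the only matching condition that needs any attention, and it holds for free. Next I would invoke Lemma~\ref{Lemmak1belonsl}, which tells us that under the assumption $r(H)\ge 4$ the vertex $v$ of $K_1$ does not belong to any local metric basis for $K_1+H$. This is exactly the hypothesis under which Theorem~\ref{mainTheorem}~(i) operates.

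Finally I would apply Theorem~\ref{mainTheorem}~(i) to this $H$ together with the given connected graph $G$ of order $n$, concluding immediately that $dim_l(G\odot H)=n\cdot dim_l(K_1+H)$. The case $n=1$ is included, since then $G\odot H\cong K_1+H$ and the formula degenerates to an identity. Because the genuine content lives in the two previously proved results, there is no real obstacle: the single point worth noting is that the radius condition both forces $v$ out of every local metric basis (via the lemma) and simultaneously guarantees the non-emptiness of $H$ needed to enter Theorem~\ref{mainTheorem}, so the two hypotheses dovetail without any extra case analysis.
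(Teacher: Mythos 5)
Your proposal is correct and is exactly the route the paper takes: the paper offers no separate proof, stating the theorem as a direct consequence of Lemma~\ref{Lemmak1belonsl} and Theorem~\ref{mainTheorem}~(i). Your additional checks --- that $r(H)\ge 4$ forces $H$ to be non-empty, and that the case $n=1$ reduces to $G\odot H\cong K_1+H$ --- are sound and merely make explicit what the paper leaves implicit.
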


Another consequence of Theorem \ref{mainTheorem} is the following result.

\begin{corollary}\label{bound-orders}
  For any non-empty graph   $H$  of order {$n'\ge 2$}   and any connected graph   $G$  of order $n\ge 2$,
$$n\le \dim_l(G\odot H)\le n(n'-1).$$
\end{corollary}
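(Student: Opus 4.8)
The plan is to combine Theorem \ref{mainTheorem} with the extremal characterizations of Theorem \ref{The1Zhang}. Since $H$ is non-empty we have $n'\ge 2$, and $K_1+H$ is a nontrivial connected graph of order $n'+1$ (it is connected because $v$ is adjacent to every vertex of $H$). By Theorem \ref{mainTheorem}, the value $dim_l(G\odot H)$ equals either $n\cdot dim_l(K_1+H)$, when the vertex $v$ of $K_1$ lies in no local metric basis of $K_1+H$, or $n(dim_l(K_1+H)-1)$, when $v$ lies in some local metric basis. Thus the whole problem reduces to sandwiching $dim_l(K_1+H)$ and then handling the two cases of Theorem \ref{mainTheorem} separately.

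For the lower bound I would first observe that $K_1+H$ is never bipartite: as $H$ is non-empty it contains an edge $ab$, and since $v$ is adjacent to every vertex of $H$, the vertices $v,a,b$ induce a triangle. Hence by Theorem \ref{The1Zhang} we get $dim_l(K_1+H)\ge 2$. Substituting into Theorem \ref{mainTheorem} gives, in the first case, $dim_l(G\odot H)=n\cdot dim_l(K_1+H)\ge 2n\ge n$, and in the second case $dim_l(G\odot H)=n(dim_l(K_1+H)-1)\ge n(2-1)=n$. Either way the bound $dim_l(G\odot H)\ge n$ holds.

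For the upper bound I would use the other half of Theorem \ref{The1Zhang}. Since $K_1+H$ has order $n'+1$, we always have $dim_l(K_1+H)\le n'$, with equality exactly when $K_1+H$ is complete, i.e. when $H\cong K_{n'}$. In the second case of Theorem \ref{mainTheorem} this immediately yields $dim_l(G\odot H)=n(dim_l(K_1+H)-1)\le n(n'-1)$. The delicate point is the first case, where I must sharpen the estimate to $dim_l(K_1+H)\le n'-1$; this is exactly where I would exploit the case hypothesis that $v$ lies in no local metric basis of $K_1+H$. If $K_1+H$ were complete, then by the full symmetry of a complete graph one could include $v$ in some local metric basis (omit any other single vertex), contradicting the case assumption. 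Hence $K_1+H$ is non-complete, and Theorem \ref{The1Zhang} forces $dim_l(K_1+H)\le n'-1$, giving $dim_l(G\odot H)=n\cdot dim_l(K_1+H)\le n(n'-1)$.

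The only genuine obstacle is ruling out completeness in the first case; everything else is a direct substitution into the formulas of Theorem \ref{mainTheorem}. I expect that the crux of writing the argument cleanly is isolating this symmetry observation and carefully matching the two case distinctions of Theorem \ref{mainTheorem} with the two equality cases of Theorem \ref{The1Zhang}.
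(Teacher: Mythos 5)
Your proof is correct and follows the paper's intended route: the paper states this corollary without proof as a direct consequence of Theorem \ref{mainTheorem}, and your argument supplies exactly the missing details (non-bipartiteness of $K_1+H$ for the lower bound, and the observation that in case (i) $K_1+H$ cannot be complete for the upper bound, via Theorem \ref{The1Zhang}). The only ingredient you use beyond the two cited theorems is the standard fact that the local metric dimension of a graph is at most its order minus one, which is what turns the equality characterization of Theorem \ref{The1Zhang} into the bound $dim_l(K_1+H)\le n'$; this fact is elementary (the complement of any single vertex is a local metric generator) and is implicitly assumed throughout the paper.
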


The aim of the next section is the study of the limit cases of Corollary \ref{bound-orders}.

\section{Extremal values}\label{extremal}
\begin{theorem}
Let $H$ be a graph  of order $n'$ and let   $G$  be a connected graph of order $n\ge 2$. Then
$\dim_l(G\odot H)=n(n'-1)$ if and only if $H\cong K_{n'}$. 
\end{theorem}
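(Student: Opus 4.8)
The plan is to push everything through Theorem \ref{mainTheorem}, reducing the question about $G\odot H$ to a question about the single graph $K_1+H$. By Corollary \ref{bound-orders} we already have $dim_l(G\odot H)\le n(n'-1)$, so only the characterization of equality is in play. Theorem \ref{mainTheorem} says that $dim_l(G\odot H)=n\cdot dim_l(K_1+H)$ when the vertex $v$ of $K_1$ lies in no local metric basis of $K_1+H$, and $dim_l(G\odot H)=n(dim_l(K_1+H)-1)$ when $v$ lies in some local metric basis. Hence the equality $dim_l(G\odot H)=n(n'-1)$ holds if and only if exactly one of the following occurs: (a) $v$ is in no local metric basis for $K_1+H$ and $dim_l(K_1+H)=n'-1$; or (b) $v$ is in some local metric basis for $K_1+H$ and $dim_l(K_1+H)=n'$. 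So the first step is to convert these numerical conditions on $dim_l(K_1+H)$, a graph of order $n'+1$, into structural ones using Theorems \ref{The1Zhang} and \ref{The1Zhang2}.

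Case (b) is the easy one. Since $K_1+H$ has order $n'+1$, the value $dim_l(K_1+H)=n'=(n'+1)-1$ forces, by Theorem \ref{The1Zhang}, that $K_1+H\cong K_{n'+1}$, which happens precisely when $H\cong K_{n'}$; and in that situation $v$ does belong to a local metric basis (in a complete graph any $n'$ of the $n'+1$ vertices form a basis, and one may choose one avoiding a prescribed vertex of $H$), so condition (b) is genuinely met and the value $n(n'-1)$ is attained. For case (a), $dim_l(K_1+H)=n'-1=(n'+1)-2$ forces, by Theorem \ref{The1Zhang2}, that $\omega(K_1+H)=n'$; since every vertex of $H$ is joined to $v$, we have $\omega(K_1+H)=\omega(H)+1$, so this is equivalent to $\omega(H)=n'-1$. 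Thus the only possible graphs in case (a) are those of order $n'$ that contain a clique $\langle C\rangle\cong K_{n'-1}$ but no $K_{n'}$; equivalently, $H$ is such a clique together with one extra vertex $w$ joined to at most $n'-2$ vertices of $C$.

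The substance of the theorem now concentrates on these candidate graphs with $\omega(H)=n'-1$: for each of them one must decide whether $v$ belongs to a local metric basis of $K_1+H$, because that single fact is what separates the target value $n(n'-1)$ (case (a), $v$ excluded) from the strictly smaller $n(n'-2)$ (the alternative, $v$ included). Here I would invoke the membership criterion recorded in the proof of Lemma \ref{Lemmak1belonsl}: for a local metric basis $B$, one has $v\in B$ if and only if there exists $u\in V(H)-B$ with $B\subseteq N_{K_1+H}(u)$. The underlying reason is that $v$ is adjacent to everything, so $v$ only ever distinguishes pairs of the form $(v,x)$; therefore $v$ can be forced into a basis exactly when some pair $(v,u)$ is resolved by $v$ and by no other chosen vertex, i.e. when the whole basis sits inside the neighbourhood of an omitted vertex $u$. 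The goal of this step is to prove that, within the class $\omega(H)=n'-1$, the apex $v$ is kept out of every basis precisely for $H\cong K_1\cup K_{n'-1}$, and that this graph therefore realizes case (a).

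I expect the main obstacle to be exactly this last dichotomy. A concrete way to attack it is by a counting/omission argument: a local metric generator of $K_1+H$ of minimum size $n'-1$ omits exactly two of the $n'+1$ vertices, and $v$ lies in some basis if and only if both omitted vertices can be chosen inside $V(H)$ while still resolving every edge. One then runs through the possible omitted pairs, paying particular attention to the edges incident with the extra vertex $w$ (governed by its neighbourhood $W\subseteq C$) and to the clique edges inside $C$, to determine for which configurations of $w$ a size-$(n'-1)$ generator retaining $v$ can be assembled and for which configurations every minimum generator is forced to discard $v$. Isolating the single configuration in which $v$ is unavoidably excluded, and checking it coincides with $w$ being joined to no vertex of $C$ (so that $H\cong K_1\cup K_{n'-1}$), is the delicate part of the argument; once that classification is in hand, the two numerical identities of Theorem \ref{mainTheorem} assemble the claimed equivalence immediately.
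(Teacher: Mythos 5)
Your reduction via Theorem \ref{mainTheorem} and your treatment of case (b) match the paper, but your proposal stops exactly where the real work begins: the classification of the graphs with $\omega(H)=n'-1$ for which the apex $v$ lies in no local metric basis of $K_1+H$ is announced as ``the delicate part'' and never carried out, so what you have is a plan, not a proof. Worse, the dichotomy that plan is aiming at is false, so no execution of it can succeed. Take $H\cong K_1\cup K_{n'-1}$ with $n'\ge 3$, write $C$ for the clique and $w$ for the isolated vertex, and fix any $u\in C$. The set $B=\{v\}\cup (C-\{u\})$ has cardinality $n'-1=dim_l(K_1+H)$ (the value forced by Theorem \ref{The1Zhang2}), and it is a local metric generator: the only two vertices of $K_1+H$ outside $B$ are $w$ and $u$, which are \emph{not} adjacent, so every edge of $K_1+H$ has an endpoint in $B$, and a vertex of $B$ distinguishes every pair it belongs to. Hence $B$ is a local metric basis containing $v$, i.e.\ case (a) fails for the very graph that is supposed to realize it, and Theorem \ref{mainTheorem} (ii) gives $dim_l(G\odot (K_1\cup K_{n'-1}))=n(n'-2)$, not $n(n'-1)$. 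Concretely, for $H=K_1\cup K_2$ the graph $K_1+H$ is the paw, $\{v,u_2\}$ (with $u_2$ on the triangle) is a basis containing $v$, and one checks directly that $dim_l(K_2\odot H)=2$, whereas the claimed formula gives $4$.

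In fact your own counting/omission idea, pushed one step further, settles the matter in the opposite direction: any set of $n'-1$ vertices of $K_1+H$ that contains $v$ and omits two \emph{non-adjacent} vertices of $H$ is automatically a local metric generator, and such a non-adjacent pair exists precisely when $H$ is not complete, which is guaranteed in case (a) since $\omega(H)=n'-1$. So whenever $dim_l(K_1+H)=n'-1$ the vertex $v$ belongs to some basis, Theorem \ref{mainTheorem} (ii) applies, and case (a) can never produce the value $n(n'-1)$; the correct characterization is $dim_l(G\odot H)=n(n'-1)$ if and only if $H\cong K_{n'}$. You should know that the paper's own proof commits the same oversight: it correctly shows that in case (a) the vertex outside the maximum clique must be isolated, but then asserts ``Case \emph{a} holds if and only if $H\cong K_1\cup K_{n'-1}$'' without verifying the ``if'' direction --- which is exactly the false implication you deferred. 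So the gap in your write-up is real (the decisive step is missing), but no argument could have filled it as stated; the honest outcome of your approach is a corrected theorem, not the one printed in the paper.
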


\begin{proof}By Theorem \ref{mainTheorem} we conclude that $\dim_l(G\odot H)=n(n'-1)$ if and only if exactly one of the following cases hold:
 \\
 \noindent Case \emph{a}: the vertex $v$ of $K_1$ does not belong to any local metric basis for $K_1+H$ and $\dim_l(K_1+H)=n'-1$.
\\
 \noindent Case \emph{b}: the vertex $v$ of $K_1$  belongs to a local metric basis for $K_1+H$ and $\dim_l(K_1+H)=n'$.  \\

 We first consider Case \emph{a}. By  Theorem \ref{The1Zhang2}  $\dim_l(K_1+H)=n'-1$ if and only if $\omega(H)=n'-1$.  Let $V(H)=\{u_1,u_2,...u_{n'}\}$. If $\langle V(H)-\{u_1\}\rangle$ is a clique and  $u_i\sim u_1$, then $  \{v\}\cup V(H)-\{u_1,u_i\}$ is a local metric basis for $K_1+H$, which is a contradiction. Hence  $u_1$ is an isolated vertex of $H$. So, $H\cong K_1\cup  K_{n'-1}$, which is a contradiction, as $\{v,u_3,\ldots , u_{n'}\}$ is a local metric basis of $\langle v\rangle+H$.


Finally, by Theorem \ref{The1Zhang} we deduce that Case \emph{b} holds if and only if $H\cong K_{n'}$.
\end{proof}

The radius $r(G)$ of a graph $G$ is the minimum eccentricity of any  vertex of $G$. The center of $G$, denoted by $C(G)$, is the set of vertices of $G$ with eccentricity equal to $r(G)$.

\begin{theorem}\label{Th-r=2Bipartite}
Let $H$ be a non-empty graph and let $G$   be a connected graph  of order $n\ge 2$. Then
$\dim_l(G\odot H)=n$ if and only if $H$ is a bipartite graph having only one non-trivial connected component $H^*$ and $r(H^*)\le 2$.
\end{theorem}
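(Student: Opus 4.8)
The plan is to reduce the statement to a structural condition on $H$ via Theorem \ref{mainTheorem}. Since $H$ is non-empty, any edge $xy$ of $H$ together with the vertex $v$ of $K_1$ forms a triangle in $K_1+H$, so $K_1+H$ is not bipartite and hence $dim_l(K_1+H)\ge 2$ by Theorem \ref{The1Zhang}. Consequently Case (i) of Theorem \ref{mainTheorem} can never yield the value $n$ (it would force $dim_l(K_1+H)=1$), so $dim_l(G\odot H)=n$ holds if and only if we are in Case (ii) with $dim_l(K_1+H)=2$; equivalently, if and only if there exists $a\in V(H)$ such that $\{v,a\}$ is a local metric generator for $K_1+H$.

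Next I would translate this condition into a purely combinatorial one on $H$. In $K_1+H$ every pair of vertices is at distance at most $2$, and for $x,y\in V(H)$ one has $d_{K_1+H}(x,y)=1$ when $xy\in E(H)$ and $d_{K_1+H}(x,y)=2$ otherwise. Since $v$ distinguishes every pair $\{v,x\}$ but no pair of vertices of $H$, the set $\{v,a\}$ is a local metric generator exactly when $a$ distinguishes every edge $xy$ of $H$. A short check on the truncated distances shows that $a$ distinguishes $xy$ precisely when $a\in\{x,y\}$ or $a$ is adjacent to exactly one of $x,y$. Hence the condition becomes: there is a vertex $a$ for which $N_H(a)$ is an independent set and $V(H)-N_H[a]$ is an independent set (no edge lies inside $N_H(a)$, and no edge lies outside $N_H[a]$).

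For the forward implication I would exploit this independence condition directly. If $a$ lay in a trivial component, or if some edge lay in a component other than the one containing $a$, then that edge would have both endpoints at distance $2$ from $a$ and so be undistinguished; therefore $a$ belongs to a non-trivial component $H^*$ and every edge of $H$ lies in $H^*$, making $H^*$ the unique non-trivial component. Within $H^*$ the sets $N_H(a)$ and $\{a\}\cup\big(V(H^*)-N_H[a]\big)$ are both independent and partition $V(H^*)$, which is a bipartition; thus $H^*$, and hence $H$, is bipartite. Finally, every vertex of $V(H^*)-N_H[a]$ has all its neighbours in $N_H(a)$, so it is at distance $2$ from $a$; together with distance $1$ on $N_H(a)$ this gives eccentricity at most $2$ for $a$, i.e. $r(H^*)\le 2$.

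For the converse I would pick a central vertex $a$ of $H^*$, so that every vertex of $H^*$ is within distance $2$ of $a$. Using a bipartition of $H^*$ with $a$ in one part, $N_H(a)$ lies in the other part and the distance-two vertices lie in $a$'s part; both parts are independent, so $N_H(a)$ and $V(H^*)-N_H[a]$ are independent, and the radius bound guarantees no vertex of $H^*$ is farther than $2$ from $a$. Hence $\{v,a\}$ is a local metric generator for $K_1+H$, $v$ belongs to a local metric basis, and $dim_l(K_1+H)=2$; Theorem \ref{mainTheorem}(ii) then gives $dim_l(G\odot H)=n(2-1)=n$. The main obstacle is the combinatorial translation in the second paragraph together with matching the radius bound to the bipartition in both directions; once the ``independent neighbourhood / independent co-neighbourhood'' reformulation is in hand, the component and bipartiteness arguments are routine.
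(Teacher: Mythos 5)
Your proof is correct. Its skeleton matches the paper's: both use Theorem \ref{mainTheorem}, together with the observation that $K_1+H$ contains a triangle and hence $dim_l(K_1+H)\ge 2$ by Theorem \ref{The1Zhang}, to reduce the statement to the existence of a vertex $a\in V(H)$ such that $\{v,a\}$ is a local metric generator for $K_1+H$. Where you diverge is in how that existence is characterized. The paper handles the forward direction by noting that $a$ must be a local metric generator for $H$ itself and invoking Theorem \ref{The1Zhang} again (giving bipartiteness and the unique non-trivial component), followed by a separate shortest-path argument $uu_1u_2u_3$ to force $r(H^*)\le 2$; its converse takes a central vertex and uses triangle-freeness/parity. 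You instead isolate a clean intermediate equivalence --- $\{v,a\}$ is a local metric generator for $K_1+H$ if and only if $N_H(a)$ and $V(H)-N_H[a]$ are both independent --- and read all three structural conclusions (unique non-trivial component, bipartiteness via the explicit bipartition into $N_H(a)$ and $\{a\}\cup\left(V(H^*)-N_H[a]\right)$, and $r(H^*)\le 2$) directly from it, then reverse the same condition for the converse. Your route is a bit longer but more self-contained and symmetric: the same combinatorial condition drives both implications, the truncation of distances in the join is handled explicitly, and you never apply Theorem \ref{The1Zhang} to $H$ itself --- a step that is slightly delicate in the paper, since $H$ may be disconnected while that theorem is stated for connected graphs. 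The paper's route is shorter precisely because it reuses that known characterization instead of reproving bipartiteness by hand.
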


\begin{proof}
Since $\langle v\rangle +H $ is not bipartite, by Theorem \ref{The1Zhang} we deduce $\dim_l(\langle v\rangle +H )\ge 2$. So, if $\dim_l(G\odot H)=n$, then by Theorem \ref{mainTheorem} we have that $\dim_l(\langle v\rangle +H )=2$ and $v$ belongs to a local metric basis for $\langle v\rangle +H $,  say $B=\{u,v\}$. So,  $B\cap V(H)=\{u\}$ must be a local metric generator for $H$ and, by Theorem \ref{The1Zhang}, we conclude that $H$ is a bipartite graph having only one non-trivial connected component. Moreover, if the non-trivial component of $H$ has radius  $r>2$, then there exists $u_3\in V(H)$ such that $d_H(u,u_3)=3$ and, as a consequence, for any shortest path $uu_1u_2u_3$ we have  $d_{\langle v\rangle +H }(u,u_2)=d_{\langle v\rangle +H }((u,u_3)$, \textit{i.e.}, the pair of adjacent vertices $u_2,u_3$ is  not distinguished by the elements of $B $, which is a contradiction. Therefore,  $r\le 2$.

Conversely, let $H$ be a bipartite graph where having only one non-trivial component $H^*$.  Let  $r(H^*)\le 2$,  let $a$ be a vertex belonging to the center of $H^*$ and let $v$ be the vertex of $K_1$. Since $H$ is a triangle free graph,  $a$ distinguishes every pair of adjacent vertices $x,y\in V(H^*)$. So, $\{v,a\}$ is a local metric generator for $K_1+H$, which is a local metric basis because $\dim_l(K_1+H)\ge 2$. We conclude the proof by Theorem \ref{mainTheorem} (ii).
\end{proof}

\section{The value of $\dim_l(G\odot H$) when $H$ is a bipartite graph of radius three}\label{radius3}

{Theorems \ref{Th-rM4} and \ref{Th-r=2Bipartite} suggest to consider the case where $H$ is a bepartite graph of radius three}.
To do that, we need the following additional notation. For any $a\in V(H)$, we denote $$N_H^{(i)}(a)=\{w\in V(H): \; d_H(w,a)=i\}.$$ We also define $N_H^{(i)}[a]=N_H^{(i)}(a)\cup \{a\}$. Note that $N_H^{(1)}(a)=N_H(a)$ and $N_H^{(1)}[a]=N_H[a]$. Given two sets $A,B\subset V(H)$ we say that $A$ dominates $B$ if every vertex in
$B - A$ is adjacent  to some vertex belonging to $A$. From now on we will use the notation $A\succ B$ to indicate that $A$ dominates $B$. 
 For every $x\in C(H)$, let $\beta(x)=\min \left\{|A|:\; A\subseteq  N_H(x)\; {\rm and}\; A\succ N_H^{(2)}(x)\right\}$ and  let
 $$\delta'(H)=\min_{x\in C(H)}\left\{\beta(x)\right\}.$$
\begin{lemma}\label{lemmaRadio3Bipartite}
For any  bipartite graph $H$ of radius three,  $$\dim_l(K_1+H)\le \delta'(H)+1.$$ Moreover,  $\dim_l(K_1+H)= \delta'(H)+1$ if and only if the vertex of $K_1$ belongs to a local metric basis for $K_1+H$.
\end{lemma}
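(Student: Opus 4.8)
The plan is to reduce the whole statement to how pairs are distinguished inside the triangle-free graph $K_1+H$. First I would record the elementary facts: since $v$ is adjacent to every vertex of $H$, all distances in $K_1+H$ are at most $2$; the vertex $v$ distinguishes exactly the pairs $(v,w)$ with $w\in V(H)$ and no edge of $H$; a vertex $s\in V(H)$ distinguishes $(v,w)$ iff $s\notin N_H(w)$; and, because $H$ is bipartite and hence triangle-free, $s\in V(H)$ distinguishes an edge $xy$ of $H$ iff $s$ is equal or adjacent to at least one endpoint of $xy$. Consequently a set $A\subseteq V(H)$ distinguishes all edges of $H$ precisely when every edge of $H$ has an endpoint in $N_H(A)$. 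For the upper bound I would fix $x\in C(H)$ realizing $\beta(x)=\delta'(H)$ together with a nonempty $A\subseteq N_H(x)$ satisfying $A\succ N_H^{(2)}(x)$ and $|A|=\delta'(H)$, and check that $\{v\}\cup A$ is a local metric generator: $v$ settles every pair $(v,w)$, and, using the layering $N_H^{(0)}(x),\dots,N_H^{(3)}(x)$ of the connected bipartite graph $H$ (in which edges join only consecutive layers), each edge incident with $x$ has the endpoint $x$ in $N_H(A)$ since $A\subseteq N_H(x)$, while every remaining edge has its endpoint in $N_H^{(2)}(x)$ dominated by $A$, hence in $N_H(A)$. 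This gives $dim_l(K_1+H)\le|\{v\}\cup A|=\delta'(H)+1$.

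The forward implication of the equivalence is then immediate: if $dim_l(K_1+H)=\delta'(H)+1$, the generator $\{v\}\cup A$ just constructed has exactly this size, so it is itself a local metric basis and it contains $v$. For the converse, suppose $v$ belongs to a local metric basis $B$. By the observation used in the proof of Lemma \ref{Lemmak1belonsl}, there is a vertex $u\in V(H)-B$ with $B\cap V(H)\subseteq N_H(u)$; I would set $A:=B\cap V(H)$, so $|A|=|B|-1$. The goal is to prove that $u\in C(H)$ and that $A\subseteq N_H(u)$ dominates $N_H^{(2)}(u)$, for then $\beta(u)\le|A|$, whence $\delta'(H)\le\beta(u)\le|B|-1$ and so $dim_l(K_1+H)\ge\delta'(H)+1$; together with the upper bound this forces equality.

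Two points remain, and this is where the real work lies. To see that $u$ has eccentricity exactly $3$, I use that $r(H)=3$ gives eccentricity at least $3$, and that if some vertex lay at distance $4$ from $u$, then for the last edge $p_3p_4$ of a shortest path from $u$ to it every $s\in A\subseteq N_H(u)$ would satisfy $d_{K_1+H}(s,p_3)=d_{K_1+H}(s,p_4)=2$, so $B$ could not distinguish the edge $p_3p_4$ (and $v$ distinguishes no edge), a contradiction. To see that $A$ dominates $N_H^{(2)}(u)$, take $b\in N_H^{(2)}(u)$ and a neighbour $a'\in N_H(u)$ of $b$; the edge $a'b$ is distinguished by some $s\in A$, and since $s\in N_H(u)$ cannot be adjacent to $a'\in N_H(u)$ (same layer of a bipartite graph), we must have $s=a'$ or $s\sim b$, and in either case $b\in N_H(A)$. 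I expect this last domination step, forcing the distinguishing vertex onto the deeper endpoint, to be the main obstacle, alongside the eccentricity computation; both hinge on bipartiteness, which rules out edges inside a single distance layer.
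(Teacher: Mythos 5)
Your proof is correct and takes essentially the same route as the paper: the same upper-bound construction $\{v\}\cup A$ with $A\subseteq N_H(x)$ for a central vertex $x$ dominating $N_H^{(2)}(x)$, the same trivial forward implication, and the same converse via the observation that $v$ in a basis forces $B\cap V(H)\subseteq N_H(u)$ for some $u$, which is then shown to be central with $B\cap V(H)$ dominating $N_H^{(2)}(u)$. The only differences are presentational: you replace the paper's three-case layer analysis by the unified criterion that $A$ distinguishes every edge of $H$ iff each edge has an endpoint in $N_H(A)$, and you spell out the eccentricity and domination steps that the paper asserts without detail.
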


\begin{proof}Let $u$ be a vertex belonging to the center of $H$ and $ A\subseteq  N_H(u)$ such that $ A\succ N_H^{(2)}(u)$ and $|A|=\delta'(H)$. Let us show that $B=A\cup \{v\}$ is a local metric generator for $\langle v \rangle+H$.  We first note that  since $H$ is bipartite, for two adjacent vertices $x,y\not\in B$ it follows $d_H(u,x)\ne d_H(u,y)$.
Hence, without loss of generality, we may consider the following three cases for two adjacent vertices $x,y\not\in B$.

\noindent Case 1: $x=u$ and $y\sim u$.  In this case for every $z\in A$ it follows $d_{K_1+H}(x,z)=1$ and $d_{K_1+H}(y,z)=2$.\\

\noindent Case 2:  $d_H(u,x)=1$ and $d_H(u,y)=2$. In this case $y\in N_H^{(2)}(u)$ and there exists $x'\in A$ such that $x'\sim y$ and, since $H$ is a bipartite graph,  $x' \not\sim x$. So, $d_{K_1+H}(x,x')=2$ and $d_{K_1+H}(y,x')=1$.\\

\noindent Case 3:  $d_H(u,x)=2$ and $d_H(u,y)=3$. In this case $x\in N_H^{(2)}(u)$ and there exists $x'\in A$ such that $ux'xy$ is a shortest path in $H.$ So, $d_{K_1+H}(x,x')=1$ and $d_{K_1+H}(y,x')=2$.\\

Thus,  $B$ is a local metric generator for  $K_1+H$ and, as a consequence, $\dim_l(K_1+H)\le \delta'(H)+1.$

Moreover, if $\dim_l(K_1+H)= \delta'(H)+1$, then $B$ is a local metric basis for $K_1+H$ which contains the vertex of $K_1$.

Conversely, let $S$ be  a local metric basis for $K_1+H$ which contains the vertex  $v$ of $K_1$. In this case there exists $w\in V(H)$ such that $N_H(w)\supset S-\{v\}$. If $w\not\in C(H)$, then there exists $w'\in V(H)$ such that $d_H(w,w')\ge 4$ and for every shortest path $ww_1w_2w_3w'$ from $w$ to $w'$  the pair of vertices $w_3,w'$ is not resolved in $K_1+H$ by any $s\in S$, which is a contradiction. Hence, $w\in C(H)$ and $S-\{v\}\succ N_H^{(2)}(w)$. The minimality of the cardinality of $S$ leads to  $|S-\{v\}|=\delta'(H).$
 Therefore, $\delta'(H)+1=|S|=\dim_l(K_1+H)$.
\end{proof}

As a direct consequence of Theorem \ref{mainTheorem}  and Lemma \ref{lemmaRadio3Bipartite} we obtain the following result.

\begin{theorem}
  Let $H$ be a bipartite graph  of radius three and let $G$ be a connected graph of order $n\ge 2$. Then
  $$\dim_l(G\odot H)\le n\cdot \delta'(H).$$
\end{theorem}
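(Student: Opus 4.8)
The plan is to combine the two preceding results directly, since the final theorem is stated to be a ``direct consequence'' of Theorem \ref{mainTheorem} and Lemma \ref{lemmaRadio3Bipartite}. First I would invoke Lemma \ref{lemmaRadio3Bipartite}, which gives the universal bound $dim_l(K_1+H)\le \delta'(H)+1$ for any bipartite graph $H$ of radius three. The strategy is then to split into exactly the two cases governed by the dichotomy in Theorem \ref{mainTheorem}, according to whether or not the vertex of $K_1$ belongs to a local metric basis for $K_1+H$.

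In the case where the vertex $v$ of $K_1$ \emph{does} belong to a local metric basis for $K_1+H$, the equality clause of Lemma \ref{lemmaRadio3Bipartite} tells us that $dim_l(K_1+H)=\delta'(H)+1$. Here I would apply Theorem \ref{mainTheorem}(ii), which is the applicable branch precisely because $v$ lies in a basis; it yields
$$dim_l(G\odot H)=n\bigl(dim_l(K_1+H)-1\bigr)=n\bigl((\delta'(H)+1)-1\bigr)=n\cdot\delta'(H),$$
so in fact equality holds and the desired upper bound is attained.

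In the complementary case where $v$ does \emph{not} belong to any local metric basis for $K_1+H$, the same equality clause of Lemma \ref{lemmaRadio3Bipartite} forces $dim_l(K_1+H)<\delta'(H)+1$, hence $dim_l(K_1+H)\le \delta'(H)$. Now Theorem \ref{mainTheorem}(i) is the relevant branch, giving
$$dim_l(G\odot H)=n\cdot dim_l(K_1+H)\le n\cdot\delta'(H),$$
which is again the claimed bound. Combining the two cases completes the argument.

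I do not anticipate any genuine obstacle, since all the analytical work has already been carried out in Lemma \ref{lemmaRadio3Bipartite}; the only point requiring care is making sure the correct branch of Theorem \ref{mainTheorem} is matched to each case and that the ``if and only if'' in the lemma is used in the right direction to convert the membership condition on $v$ into the precise value (or upper bound) of $dim_l(K_1+H)$. It is worth noting that the statement is only an inequality rather than an equality: in the first case one gets equality, but in the second case one obtains merely $\le$, so the theorem correctly records the weaker conclusion that holds uniformly across both cases.
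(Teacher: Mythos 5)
Your proof is correct and follows exactly the route the paper intends: the paper states this theorem without proof as a ``direct consequence'' of Theorem \ref{mainTheorem} and Lemma \ref{lemmaRadio3Bipartite}, and your two-case argument (using the equality clause of the lemma when the vertex of $K_1$ lies in a basis, and the strict inequality it forces otherwise) is precisely the intended derivation. Both branches of Theorem \ref{mainTheorem} are matched to the correct case, and the ``if and only if'' of the lemma is used in the right directions, so there is nothing to add.
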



\subsection{The maximum value}
In this section we show that the above bound is attained for a subfamily  of bipartite graphs   of diameter three that does not contain a square (a subgraph  isomorphic to $K_{2,2}$).  In such a case, the girth of $H$ must be six and $H=(U_1\cup U_2,E)$ satisfies the following property:
\begin{enumerate}[$\blacklozenge$]
\item For any $i\in \{1,2\}$ and any two distinct vertices $a,b\in U_i$,  $\vert N_H(a)\cap N_H(b)\vert=1$.
\end{enumerate}
Therefore, $H$ is the incidence graph of a finite projective plane.   So, we have two possibilities (see, for instance, \cite{BondyBook2008}):

\begin{enumerate}[(P1)]

\item $H=(U_1\cup U_2,E)$ is the incidence graph of a degenerate projective plane. In this case $\vert U_1\vert=\vert U_2\vert =t$, $t\ge 3$,  and $H$ is a pseudo sphere graph  $S_t$ (also called near pencil) defined as follows: we consider $t-1$ path graphs of order  $4$ and we identify one extreme of each one of the $t-1$ path graphs in one pole $a$ and all the other extreme vertices of the paths in a pole $b$. In particular, $S_3$ is the cycle graph $C_6$.

\item $H=(U_1\cup U_2,E)$ is the incidence graph of a non-degenerate projective plane of order $q$. In this case $H$ is a regular graph of degree $\delta_H=q+1$ and $\vert U_1 \vert=\vert U_2\vert =q^2+q+1$. Note that
$\vert U_1 \vert=\vert U_2\vert =\delta_H^2-\delta_H+1$.
\end{enumerate}


In  the case (P1)  the set $B=\{a,b\}$ composed by both poles of the pseudo sphere is a dominating set of $S_t$. Thus, $B$ is a local metric basis for $\langle v \rangle+S_r$ and $N_{S_t}(a)\cap N_{S_t}(b)=\emptyset$. Also, there are no local metric generators composed by two vertices at distance two, so the vertex $v$ does not belong to any local metric basis for $\langle v \rangle+S_t$ and, by Theorem \ref{mainTheorem} (i), we obtain that for any connected graph $G$ of order $n\ge 2$, $\dim_l(G\odot S_t)=2n$.

The rest of this section covers the study of case (P2), \textit{\textit{i.e.}}, the case where $H$ is the incidence graph  of a non-degenerate projective plane.

\begin{lemma}\label{gradosIguales}
For any    bipartite graph $H\not\cong S_t$ of diameter three and girth   six, $$\delta'(H)=\delta_H.$$
\end{lemma}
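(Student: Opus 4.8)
The plan is to exploit the structural description of $H$ established at the start of this section. Since $H$ is bipartite of diameter three and girth six with $H\not\cong S_t$, it is the incidence graph of a non-degenerate projective plane of order $q$; in particular $H$ is regular of degree $\delta_H=q+1\ge 3$, it satisfies property $\blacklozenge$ (any two distinct vertices in the same part have exactly one common neighbour), and every vertex has eccentricity three, so $C(H)=V(H)$. Because $H$ is regular and $C(H)=V(H)$, it suffices to show that $\beta(x)=\delta_H$ for an arbitrary vertex $x$, and this immediately gives $\delta'(H)=\min_{x\in C(H)}\beta(x)=\delta_H$.

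First I would fix $x\in V(H)$ and describe the relevant distance classes: $N_H(x)$ consists of the $\delta_H$ neighbours of $x$, while $N_H^{(2)}(x)$ consists of the remaining vertices lying in the part of $x$ (each is at distance exactly two, since $H$ is bipartite of diameter three). To evaluate $\beta(x)$ I need the minimum size of a set $A\subseteq N_H(x)$ with $A\succ N_H^{(2)}(x)$; as $A$ and $N_H^{(2)}(x)$ lie in opposite parts, $A$ dominates $N_H^{(2)}(x)$ precisely when every $y\in N_H^{(2)}(x)$ has a neighbour in $A$. The easy half is the upper bound: for any $y\in N_H^{(2)}(x)$, property $\blacklozenge$ yields a (unique) common neighbour $w\in N_H(x)$ of $x$ and $y$, so $A=N_H(x)$ dominates $N_H^{(2)}(x)$ and hence $\beta(x)\le |N_H(x)|=\delta_H$.

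The core of the argument, and the step I expect to be the main obstacle, is the matching lower bound $\beta(x)\ge\delta_H$, which I would establish by a private-neighbour argument. The decisive consequence of $\blacklozenge$ is that each $y\in N_H^{(2)}(x)$ has \emph{exactly one} neighbour in $N_H(x)$, namely its unique common neighbour with $x$; consequently the family $\{\,N_H(w)\setminus\{x\}:\ w\in N_H(x)\,\}$ partitions $N_H^{(2)}(x)$. Non-degeneracy gives $\delta_H\ge 2$, so each block $N_H(w)\setminus\{x\}$ is non-empty, meaning every $w\in N_H(x)$ owns a vertex of $N_H^{(2)}(x)$ adjacent to no other member of $N_H(x)$. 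Therefore any dominating set $A\subseteq N_H(x)$ of $N_H^{(2)}(x)$ is forced to contain every such $w$, whence $A=N_H(x)$ and $\beta(x)=\delta_H$.

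Combining the two bounds gives $\beta(x)=\delta_H$ for every $x$, and together with regularity and $C(H)=V(H)$ this yields $\delta'(H)=\delta_H$. The only delicate points to check carefully are that $\blacklozenge$ genuinely holds in \emph{both} parts (so that the argument is symmetric in ``points'' and ``lines'' and applies to every choice of $x$) and that the private blocks $N_H(w)\setminus\{x\}$ are non-empty; both follow from the non-degeneracy of the projective plane, so no further case analysis should be needed.
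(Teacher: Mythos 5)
Your proof is correct and takes essentially the same approach as the paper: both arguments rest on the girth-six/property $\blacklozenge$ fact that the punctured neighborhoods $N_H(w)\setminus\{x\}$, $w\in N_H(x)$, are pairwise disjoint subsets of $N_H^{(2)}(x)$, and both conclude that the only subset of $N_H(x)$ dominating $N_H^{(2)}(x)$ is $N_H(x)$ itself. The only difference is the finishing step: the paper compares cardinalities, using $\vert U_i\vert=\delta_H^2-\delta_H+1$ to force $(\delta_H-1)\vert A\vert\ge \delta_H(\delta_H-1)$ and hence $\vert A\vert\ge\delta_H$, whereas you force each $w\in N_H(x)$ into $A$ via a private neighbor in the nonempty block $N_H(w)\setminus\{x\}$ --- a slightly more elementary finish that avoids the explicit part-size count.
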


 \begin{proof}
 Let $x\in U_i$, $i\in \{1,2\}$. Since for any $y,z\in N_H(x)$  we have $N_H(y)\cap N_H(z)=\{x\}$, 
 we deduce that for any   $A\subseteq N_H(x)$,
 $$
 \left\vert  N_H^{(2)}(x) \right\vert=\vert U_i-\{x\}\vert\ge \displaystyle\left\vert\bigcup_{y\in A} (N_H(y)-\{x\})\right\vert=\sum_{y\in A}(\vert N_H(y)\vert-1)=\left(\delta_H-1 \right)\vert A\vert.
 $$
Therefore, since $\vert U_i \vert=\delta_H^2-\delta_H+1$, we have that $A\succ  N_H^{(2)}(x)$ if and only if $A=N_H(x)$.
 \end{proof}

\begin{lemma}\label{dominante}
Let $H=(U_1\cup U_2,E)\not\cong S_t$  be a  bipartite graph of diameter three and  girth   six. For any   local  metric basis $B$ of $K_1+H$,   either  $B\cap U_1=\emptyset$ or $B\cap U_2=\emptyset$.
\end{lemma}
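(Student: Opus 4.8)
The plan is to reduce the local metric condition in $K_1+H$ to a covering problem in the projective plane and then argue by minimality. Since $K_1+H$ has diameter two, for $x,y\in V(H)$ we have $d_{K_1+H}(x,y)=1$ when $x\sim y$ and $d_{K_1+H}(x,y)=2$ otherwise, while the apex $v$ is at distance one from every vertex of $H$. Hence $v$ distinguishes no edge of $H$, and a vertex $z\in V(H)$ distinguishes an adjacent pair formed by a point $p\in U_1$ and an incident line $\ell\in U_2$ precisely when $z$ is a point on $\ell$ or a line through $p$. Writing $B_1=B\cap U_1$ and $B_2=B\cap U_2$, I would record that $B$ resolves every edge of $H$ if and only if there is no incidence between $X_0:=U_1\setminus N_H(B_2)$, the points lying on no line of $B_2$, and $Y_0:=U_2\setminus N_H(B_1)$, the lines through no point of $B_1$.

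Assume for contradiction that a local metric basis $B$ has $B_1\neq\emptyset$ and $B_2\neq\emptyset$, and put $s=|B_1|$, $t=|B_2|$. The first quantitative step uses the incidence parameters of a non-degenerate plane of order $q$: each line has $q+1$ points, two lines meet in a unique point, and any $t$ lines cover at most $tq+1$ points, with equality only for a pencil. If $s+t\le q+1$ then $|X_0|\ge (q^2+q+1)-(tq+1)=q(q+1-t)\ge qs$, so $X_0\setminus B_1\neq\emptyset$; choosing $p\in X_0\setminus B_1$, at most $s\le q$ of the $q+1$ lines through $p$ can pass through a point of $B_1$, so some line $\ell$ through $p$ lies in $Y_0$, and the pair $p,\ell$ is then not distinguished by $B$. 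This contradiction shows that a two-sided set resolving all edges of $H$ must satisfy $s+t\ge q+2$.

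I would then split according to whether the apex $v$ belongs to $B$. If $v\in B$, then $B_1\cup B_2$ alone must resolve every edge of $H$, because $v$ distinguishes none; but by Lemma \ref{lemmaRadio3Bipartite} and Lemma \ref{gradosIguales} we have $|B_1|+|B_2|=|B|-1\le \delta'(H)=\delta_H=q+1$, contradicting $s+t\ge q+2$. Hence no basis containing $v$ can be two-sided. It remains to treat $v\notin B$ with $s+t=q+2$, where the aim is to show that such a minimal two-sided configuration must leave some apex pair $(v,w)$ unresolved. A useful sub-observation is that $X_0=\emptyset$ forces $B_2$ to be a pencil and $B_1$ a single point, with the dual statement (via the point–line symmetry of the parameters) when $Y_0=\emptyset$.

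The main obstacle is exactly this last case. The natural exchange, namely deleting $B_1$ and restoring edge-resolution by adjoining lines to $B_2$, does not strictly decrease $|B|$, because the minimum covers of the point set are precisely pencils of size $q+1$ and the minimum blocking sets are precisely lines, so the extremal size is genuinely $q+2$. In particular the configuration consisting of the pencil through a vertex $u$ together with the point $u$ itself sits exactly at this size and is two-sided, so the required contradiction cannot be obtained from cardinality alone; it must be extracted from a careful analysis of how the forced sets $X_0$ and $Y_0$ interact with the apex pairs $(v,w)$, using the no-square (girth six) hypothesis to pin down the structure of every size-$(q+2)$ two-sided edge-resolving set and then to locate an unresolved apex pair. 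I expect essentially all of the work, and all of the genuine difficulty, to be concentrated here.
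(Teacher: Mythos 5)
Your setup and your first two steps are correct: the translation of the local metric condition into the flag--covering condition (an edge $p\ell$ is unresolved exactly when $p\in X_0$ and $\ell\in Y_0$), the counting argument showing that any edge-resolving set meeting both classes must satisfy $|B_1|+|B_2|\ge q+2$, and the elimination of the case $v\in B$ via $dim_l(K_1+H)\le \delta'(H)+1=q+2$ from Lemmas \ref{lemmaRadio3Bipartite} and \ref{gradosIguales}. But the case you leave open ($v\notin B$, $|B_1|+|B_2|=q+2$) is a genuine gap, and the strategy you propose for it --- locating an unresolved apex pair $(v,w)$ --- provably cannot work. The pair $(v,w)$ is unresolved by $B$ precisely when $B\subseteq N_H(w)$, and $N_H(w)$ lies entirely inside one class of the bipartition; hence \emph{any} set meeting both $U_1$ and $U_2$ resolves every apex pair: a point of $B_1$ is at distance $0$ or $2$ from each $w\in U_1$ but at distance $1$ from $v$, and dually a line of $B_2$ separates $v$ from each $w\in U_2$.

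Moreover, the gap is unfixable, because the configuration you flagged as the obstruction is in fact a counterexample to the lemma itself: take $B=\{u\}\cup N_H(u)$, a point $u$ together with the pencil of all $q+1$ lines through it. This set resolves every edge of $H$ (a flag $(u,\ell)$ is resolved by $u$ itself, and a flag $(p,\ell)$ with $p\ne u$ is resolved by the pencil line through $p$, which is at distance $1$ from $p$ and at distance $0$ or $2$ from $\ell$), and it resolves every apex pair since it meets both classes; so it is a local metric generator of cardinality $q+2$. It is also minimum, i.e.\ $dim_l(K_1+H)=q+2$: your first step rules out any smaller set whose trace on $V(H)$ meets both classes, while a set whose trace lies in one class, say in $U_1$, must meet every line (else all flags on a missed line are unresolved); a set of points of a projective plane of order $q$ meeting every line has at least $q+1$ points, with equality only for the point set of a line, and such a line then leaves the apex pair formed by $v$ and that very line unresolved when $v$ is not in the set, while if $v$ is in the set the trace alone already needs $q+1$ elements. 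Hence $B$ is a local metric basis with $B\cap U_1\ne\emptyset\ne B\cap U_2$, so Lemma \ref{dominante} is false, and the same example refutes Lemma \ref{vpertenece}. For comparison, the paper's own proof collapses at the inequality $|B\cap(U_1\cup U_2)|\le\delta_H$, which follows from Lemma \ref{lemmaRadio3Bipartite} only when $v\in B$: the ``moreover'' there places $v$ in \emph{some} basis, not in every basis. The downstream formula $dim_l(G\odot H)=n\cdot\delta_H$ nevertheless survives, because Theorem \ref{mainTheorem} (ii) only requires $v$ to belong to some local metric basis, and $\{v\}\cup N_H(u)$ is one. So your closing remark that the contradiction ``cannot be obtained from cardinality alone'' was right for a stronger reason than you suspected: no contradiction exists at all.
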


\begin{proof}
  We proceed by contradiction. Suppose that $B_1=B\cap U_1\ne \emptyset$ and  $B_2=B\cap U_2\ne \emptyset$. We differentiate two cases.
\\
\\
\noindent{Case 1}: $ B_1\cup N_H(B_2)\ne U_1$ or $ B_2\cup N_H(B_1)\ne U_2$. We take, without loss of generality, $x\in U_1$ such that $x\not\in B_1\cup N_H(B_2)$. Since $B$ is a local metric basis for $K_1+H$ and $N_H(x)\cap B_2=\emptyset$, the set  $N_H(x)$ must be dominated by $B_1$. Moreover, since $H$ is a square free graph, for any $b\in B_1$  there exists only one vertex   $y_b\in  N_H(x)\cap N_H(b)$. Thus,
$\delta_H=\vert N_H(x)\vert \le \vert B_1\vert$. On the other hand, by Lemmas \ref{lemmaRadio3Bipartite} and \ref{gradosIguales} we have $\vert B\cap (U_1\cup U_2)\vert \le \delta_H$. Hence, the assumption $B_2=B\cap U_2\ne \emptyset$ leads to $\vert B_1\vert\le \delta_H-1$, which is a contradiction with  the fact that $\vert B_1\vert\ge \delta_H$.
\\
\\
\noindent{Case 2}: $ B_1\cup N_H(B_2)= U_1$ and $ B_2\cup N_H(B_1)= U_2$.
If $\vert B_1\vert = \vert B_2\vert=1$, then
$\delta_H^2-\delta_H+1=\vert U_1\vert = \vert B_1\cup N_H(B_2)\vert \le 1+\delta_H$, which is a contradiction for $\delta_H>2$. Thus, without loss of generality, we assume that $\vert B_2\vert \ge 2$. Let $a,b\in B_2$ and let $c\in U_1$ such that $\{c\}=N_H(a)\cap N_H(b).$ We define $B_1'=B_1\cup \{c\}$, $B_2'=B_2-\{a,b\}$  and $B'=B_1'\cup B_2'$. Note that $\vert B'\vert < \vert B\vert$.   We take two adjacent vertices $x,y$ such that $x\in U_1-B'_1$ and $y\in U_2- B_2'$.
Now, if $y\in \{a,b\} $, then $c\in B'$ distinguishes the pair $x,y$ and if $y\not\in \{a,b\} $, then there exists $y'\in B_1\subseteq B'$ such that $y'$ is adjacent to $y$. Thus, $B'$ is a local metric basis for $K_1+H$, which is a contradiction.

Since both cases lead to a contradiction, the proof is complete.
\end{proof}

\begin{lemma}\label{vpertenece}
Let $H\not\cong S_t$ be a  bipartite graph of diameter three and  girth  six. Then the vertex  of $K_1$ belongs to any local metric basis for   $K_1+H$.
\end{lemma}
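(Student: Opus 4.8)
The plan is to reduce the statement to a single dimension count and then invoke the three preceding lemmas. By Lemma~\ref{lemmaRadio3Bipartite} the vertex of $K_1$ lies in a local metric basis for $K_1+H$ if and only if $dim_l(K_1+H)=\delta'(H)+1$, and that same lemma gives the inequality $dim_l(K_1+H)\le \delta'(H)+1$; moreover $\delta'(H)=\delta_H$ by Lemma~\ref{gradosIguales}. Hence it suffices to establish the matching lower bound $dim_l(K_1+H)\ge \delta_H+1$. I would argue by contradiction: if the vertex $v$ of $K_1$ belonged to no local metric basis, then Lemma~\ref{lemmaRadio3Bipartite} would force $dim_l(K_1+H)\le \delta_H$, so every local metric basis $B$ would satisfy $|B|\le\delta_H$ and $v\notin B$; thus $B\subseteq V(H)$ and, by Lemma~\ref{dominante}, I may assume $B\subseteq U_2$.

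Next I would pin down what such a $B\subseteq U_2$ must achieve. For an edge $xy$ of $H$ with $x\in U_1$, $y\in U_2$ and a vertex $b\in U_2$, the capped distances in the join satisfy $d_{K_1+H}(b,y)\in\{0,2\}$ and $d_{K_1+H}(b,x)\in\{1,2\}$, so $b$ distinguishes $x$ and $y$ precisely when $x\sim b$. Consequently $B$ distinguishes every edge of $H$ if and only if every vertex of $U_1$ has a neighbour in $B$, i.e.\ $N_H(B)=U_1$. Since each $b\in B$ has exactly $\delta_H$ neighbours and $|U_1|=\delta_H^2-\delta_H+1$, this domination condition forces $|B|\ge |U_1|/\delta_H>\delta_H-1$, and therefore $|B|=\delta_H$.

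The crux is then to show that $\delta_H$ vertices of $U_2$ dominating $U_1$ must be exactly $N_H(x_0)$ for a single $x_0\in U_1$ (all ``lines'' through one ``point''), and I expect this counting step to require the most care. Writing $m_P$ for the number of vertices of $B$ adjacent to $P\in U_1$, the identities $\sum_P m_P=|B|\,\delta_H=\delta_H^2$ and $\sum_P \binom{m_P}{2}=\binom{|B|}{2}=\binom{\delta_H}{2}$ hold, the latter because any two vertices of $U_2$ share exactly one common neighbour (square-freeness). Domination gives $m_P\ge 1$ for every $P\in U_1$, so with $e_P=m_P-1$ one obtains $\sum_P e_P=\delta_H-1$ and $\sum_P e_P^2=(\delta_H-1)^2$; then $\sum_P e_P^2\le (\max_P e_P)\sum_P e_P$ yields $\max_P e_P\ge \delta_H-1$, while $m_P\le|B|=\delta_H$ forces $\max_P e_P=\delta_H-1$. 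Hence some $x_0\in U_1$ is adjacent to all $\delta_H$ vertices of $B$, that is, $B=N_H(x_0)$.

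Finally, from $B=N_H(x_0)$ every $b\in B$ satisfies $d_{K_1+H}(b,x_0)=1=d_{K_1+H}(b,v)$, so $B$ fails to distinguish the adjacent pair $v,x_0$ of $K_1+H$, contradicting that $B$ is a local metric generator. Therefore no basis avoiding $v$ exists, whence $dim_l(K_1+H)=\delta_H+1$, and by Lemma~\ref{lemmaRadio3Bipartite} the vertex of $K_1$ belongs to a local metric basis for $K_1+H$.
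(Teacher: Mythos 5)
Your argument's combinatorial core is sound and genuinely different from the paper's. Both proofs reduce (via Lemmas \ref{lemmaRadio3Bipartite}, \ref{gradosIguales} and \ref{dominante}) to a putative basis $B$ inside one colour class with $|B|=\delta_H$ which must dominate the opposite class; from there the paper argues locally --- either $B=N_H(a)$ for some vertex $a$, and then the adjacent pair $a,v$ is undistinguished, or one fixes $b,b'\in B$ with unique common neighbour $a$ and a vertex $x_a\in N_H(a)-B$, and square-freeness injects $N_H(x_a)-\{a\}$ into $B-\{b,b'\}$, giving the contradiction $\delta_H-1\le \delta_H-2$. You instead run a global double count: $\sum_P m_P=\delta_H^2$ and $\sum_P\binom{m_P}{2}=\binom{\delta_H}{2}$ (each pair of vertices of $B$ has exactly one common neighbour), and the inequality $\sum_P e_P^2\le\left(\max_P e_P\right)\sum_P e_P$ forces some $x_0$ adjacent to all of $B$, so $B=N_H(x_0)$, which then fails on the pair $v,x_0$. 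Both routes are correct; yours is more systematic and lets the projective-plane parameters do the work, while the paper's is more elementary but terser.

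The place where you and the paper truly part ways is logical, and it favours you. The lemma literally asserts that $v$ lies in \emph{every} local metric basis, and the paper's proof takes an arbitrary basis $B$ with $v\notin B$ and claims $|B|\le\delta_H$ from Lemmas \ref{lemmaRadio3Bipartite} and \ref{gradosIguales}. But Lemma \ref{lemmaRadio3Bipartite} equates $dim_l(K_1+H)=\delta'(H)+1$ with $v$ belonging to \emph{some} basis, so it yields $|B|\le\delta_H$ only under the hypothesis that \emph{no} basis contains $v$ --- which is exactly the hypothesis you assume. The cost is that your contradiction proves only the existence statement: some basis contains $v$, equivalently $dim_l(K_1+H)=\delta_H+1$. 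That is in fact the only true version, and it is all the paper ever uses (Theorem \ref{mainTheorem} (ii) requires one basis containing $v$): for instance, in the Heawood graph, taking $x_0,y$ distinct vertices of one colour class, the set $N_H(x_0)\cup\{y\}$ is a local metric generator of cardinality $\delta_H+1=dim_l(K_1+H)$ avoiding $v$, so the ``every basis'' form of the lemma is false. Accordingly, delete your closing claim that ``no basis avoiding $v$ exists'': your argument never addresses bases of cardinality $\delta_H+1$ that avoid $v$, the claim is false, and nothing you conclude afterwards depends on it.
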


\begin{proof} Let $B$ be a local metric basis for $\langle v \rangle+H$.  We proceed by contradiction. Suppose that $v\notin B$. By Lemmas \ref{lemmaRadio3Bipartite} and \ref{gradosIguales} we have $\vert B\vert \le \delta_H$.  By Lemma \ref{dominante} we can assume that $B\subset B_1$.  Now, if $\vert B\vert\le \delta_H-1$, then
$$\vert N_H(B)\vert=\left\vert \bigcup_{b\in B}N_H(b)\right\vert\le \sum_{b\in B}\vert N_H(b)\vert=(\delta_H-1)\delta_H<\vert U_2\vert,$$
which is a contradiction because if there exist two adjacent vertices $x,y$ such that  $x\in U_1-B$ and $y\in U_2-N_H(B)$, then the pair $x,y$ is not distinguished by the elements of $B$.   Hence, we conclude $\vert B\vert =\delta_H$.

Now, if there exists $a\in U_2$ such that $N_H(a)=B$, then the pair of adjacent vertices $a,v$ is not distinguished by the elements of $B$, which is a contradiction. Thus, let $b,b'\in B$, $a\in N_H(b)\cap N_H(b')$  and  $x_a\in N_H(a)-B$. Since $B$ is a local metric basis and $H$ is a square free graph, for every $y,z\in N_H(x_a)$, there exist two vertices  $b_y \in (B-\{b,b'\})\cap N_H(y)$ and $b_z \in (B-\{b,b'\})\cap N_H(z)$ such that $b_y\ne b_z$. Hence, $$\delta_H-1=\vert N_H(x_a)-a\vert \le \vert B-\{b,b'\}\vert =\delta_H-2,$$
which is a contradiction. Therefore, $v$ must belong to $B$.
\end{proof}

\begin{theorem}
Let $H \not\cong S_t$ be a  bipartite graph of diameter three and girth  six. Then for any connected graph $G$ of order $n\ge 2$,
$$\dim_l(G\odot H)=n\cdot \delta_H.$$
\end{theorem}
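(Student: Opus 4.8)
The plan is to harvest the three preceding lemmas on incidence graphs of projective planes and feed the result into Theorem \ref{mainTheorem}(ii). The hypotheses (bipartite, diameter three, girth six, $H\not\cong S_t$) already place $H$ in case (P2): since girth six forbids any $C_4$, any two vertices of the same part share exactly one neighbour, so $H$ is the incidence graph of a non-degenerate projective plane and is regular of degree $\delta_H$ with $|U_1|=|U_2|=\delta_H^2-\delta_H+1$. Before applying Lemma \ref{lemmaRadio3Bipartite} I would first record that $H$ has radius three: for any vertex $x$, the bipartiteness and diameter three force every vertex of the opposite part that is not adjacent to $x$ to lie at distance exactly three, and there are $(\delta_H^2-\delta_H+1)-\delta_H=(\delta_H-1)^2\ge 1$ such vertices. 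Hence every vertex has eccentricity three and $r(H)=3$, so Lemma \ref{lemmaRadio3Bipartite} is indeed applicable.

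Next I would invoke Lemma \ref{vpertenece}, which tells us that the vertex $v$ of $K_1$ belongs to \emph{every} local metric basis for $K_1+H$, and in particular to some local metric basis. By the ``moreover'' direction of Lemma \ref{lemmaRadio3Bipartite} (now legitimate because $r(H)=3$), the presence of $v$ in a local metric basis is equivalent to the equality $dim_l(K_1+H)=\delta'(H)+1$, so this equality holds. Combining it with Lemma \ref{gradosIguales}, which gives $\delta'(H)=\delta_H$, I obtain $dim_l(K_1+H)=\delta_H+1$.

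Finally, since $v$ lies in a local metric basis for $K_1+H$, the hypothesis of Theorem \ref{mainTheorem}(ii) is satisfied, and for every connected graph $G$ of order $n\ge 2$ it yields
$$dim_l(G\odot H)=n\,\bigl(dim_l(K_1+H)-1\bigr)=n\,\bigl((\delta_H+1)-1\bigr)=n\cdot\delta_H,$$
which is the claim. There is essentially no obstacle left at this stage: all the genuinely combinatorial work lives in Lemmas \ref{gradosIguales}, \ref{dominante} and \ref{vpertenece}. The only points demanding a moment's care are verifying that the radius-three hypothesis of Lemma \ref{lemmaRadio3Bipartite} holds (supplied by the vertex-count above) and confirming that the correct branch of the dichotomy in Theorem \ref{mainTheorem} is the one with $v$ in a basis (supplied by Lemma \ref{vpertenece}); with both in hand the statement is an immediate assembly.
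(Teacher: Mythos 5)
Your proposal is correct and follows essentially the same route as the paper: Lemma \ref{vpertenece} places the vertex of $K_1$ in every local metric basis for $K_1+H$, Lemmas \ref{lemmaRadio3Bipartite} and \ref{gradosIguales} then give $dim_l(K_1+H)=\delta_H+1$, and Theorem \ref{mainTheorem}(ii) yields $dim_l(G\odot H)=n\cdot\delta_H$. Your explicit verification that $r(H)=3$ (via the count of $(\delta_H-1)^2\ge 1$ vertices at distance three) is a detail the paper leaves implicit, and it is a welcome addition since Lemma \ref{lemmaRadio3Bipartite} is stated for radius-three bipartite graphs.
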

\begin{proof}
  By  Lemma \ref{vpertenece} we know that the vertex of $K_1$  belongs to every local metric basis for $K_1+H$, by Lemmas \ref{lemmaRadio3Bipartite} and \ref{gradosIguales} we have $\dim_l(K_1+H)=\delta_H+1$ and by Theorem \ref{mainTheorem} (ii) we conclude $\dim_l(G\odot H)=n\cdot \delta_H.$
\end{proof}


Let $\pi=(P,L)$ be a finite non-degenerate projective  plane of order $q$, where $P$ is the set of points and $L$ is the set of lines. Given two sets $P'\subset P$ and $L'\subset L$, we say that $P'\cup L'$ satisfies the property ${\cal G}$, if for any point $p_0$ and any line $l_0$ such that $p_0\in l_0$ we have
\begin{itemize}
\item there exists $p\in P'$ such that $p\in l_0$, or
\item there exists $l\in L'$ such that $p_o\in l$.
\end{itemize}

 We define $\Upsilon(\pi)=\min \{\vert P'\cup L'\vert$ such that  $P'\cup L'$ satisfies the property ${\cal G} \}$.

We have that if $H$ is the incidence graph of $\pi$, then a set $P'\cup L'$ satisfies the property ${\cal G}$ if and only if  $P'\cup L'\cup \{v\}$ is a local metric generator for $\langle v\rangle +H$. Therefore, according to Lemmas \ref{lemmaRadio3Bipartite}, \ref{gradosIguales} and  \ref{vpertenece}  we conclude $$\Upsilon(\pi)=\delta_H=q.$$
Note that if $P'\cup L'$ satisfies the property ${\cal G} $ and its cardinality is the  minimum among all the sets satisfying this property, then either $P'=\emptyset$ and $L'$ is the set of lines incident to one point or $L'=\emptyset$ and $P'$ is the set composed by all the points laying on one line.


\subsection{The minimum value}
As a direct consequence of Theorems \ref{mainTheorem} and \ref{Th-r=2Bipartite} we derive the following result.

\begin{remark}
For any connected graph $H$ of radius $r(H)\ge 3$ and any connected graph $G$ of order $n\ge 2$,
$$\dim_l(G\odot H)\ge 2n.$$
\end{remark}

In this section we study the limit case of the above bound for the case where $H$ is bipartite.

\begin{lemma}\label{dimK1+H=2vnotin}
If $H$ is a graph of radius three and $\dim_l(K_1+H)=2$, then the vertex of $K_1$ does not belong to any local metric basis for $K_1+H$.
\end{lemma}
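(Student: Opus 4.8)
The plan is to argue by contradiction, using the fact that the hypothesis $dim_l(K_1+H)=2$ forces any basis containing $v$ to be as small as possible, namely $\{v,a\}$ for a single vertex $a\in V(H)$. The whole strategy rests on showing that such a lone vertex $a$ cannot possibly distinguish every edge of $H$ once distances are measured in $K_1+H$, because passing to $K_1+H$ destroys all metric information beyond the neighborhood of $a$, while the radius-three hypothesis guarantees there is an edge of $H$ lying entirely out of reach.

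First I would record the elementary distance facts in $K_1+H$: for $x,y\in V(H)$ one has $d_{K_1+H}(x,y)=1$ when $xy\in E(H)$ and $d_{K_1+H}(x,y)=2$ otherwise, while $d_{K_1+H}(v,x)=1$ for every $x\in V(H)$. Two consequences drive the argument. On one hand, $v$ distinguishes no pair of adjacent vertices lying in $V(H)$, so in the basis $B=\{v,a\}$ the vertex $a$ alone must distinguish every edge of $H$. On the other hand, the distance from the fixed vertex $a$ takes only the three values $0,1,2$ on $V(H)$, partitioning it into $\{a\}$, $N_H(a)$, and the set of vertices at $H$-distance at least two from $a$; crucially, every vertex of $N_H^{(2)}(a)\cup N_H^{(3)}(a)\cup\cdots$ sits at $K_1+H$-distance exactly two from $a$. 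Now the radius hypothesis finishes the job: since $r(H)=3$, the eccentricity of $a$ in $H$ is at least three, so $\max_{z\in V(H)}d_H(a,z)\ge 3$ and there is a shortest path $a\,p_1\,p_2\,w$ in $H$ with $d_H(a,p_2)=2$ and $d_H(a,w)=3$. The pair $p_2,w$ is an edge of $H$, yet both $p_2$ and $w$ lie at $K_1+H$-distance two from $a$; hence neither $a$ nor $v$ distinguishes $p_2$ and $w$, so $B$ is not a local metric generator, a contradiction. Therefore $v$ belongs to no local metric basis for $K_1+H$.

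The proof is short, so there is no serious computational obstacle; the one point that must be seen clearly is the middle step, namely that $K_1+H$ collapses the entire tail $N_H^{(2)}(a)\cup N_H^{(3)}(a)\cup\cdots$ into a single distance class, leaving $a$ blind to every edge living wholly outside $N_H[a]$, and that radius three is exactly what produces such an edge, as the last edge of a geodesic realizing distance three from $a$. This refines Lemma~\ref{Lemmak1belonsl}: there the stronger bound $r(H)\ge 4$ excluded $v$ with no restriction on the dimension, whereas here the weaker radius bound is compensated by the hypothesis $dim_l(K_1+H)=2$, which is precisely what reduces the candidate basis to $v$ together with a single vertex of $H$.
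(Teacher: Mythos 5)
Your proposal is correct and follows essentially the same route as the paper: the paper's proof observes that $v$ distinguishes no edge of $H$ and that, because $r(H)=3$, no single vertex of $H$ can distinguish every edge of $H$ in $K_1+H$, so a basis of size two cannot contain $v$. Your geodesic argument (the path $a\,p_1\,p_2\,w$ with both $p_2$ and $w$ at $K_1+H$-distance two from $a$) is exactly the justification the paper leaves implicit in that second observation.
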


\begin{proof}
  Let  $\{a,b\}$ be a local metric basis for $\langle v\rangle+H$. Since $r(H)=3$, no vertex of $H$ distinguishes every pair of adjacent vertices of $H$. Thus, $a\ne v$ and $b\ne v$.
\end{proof}


\begin{theorem}\label{ThdimG0H=2n-R=3}
Let  $H=(U_1,U_2,E)$ be  a bipartite  graph of radius three and let $G$ be a connected graph of order $n$.
Then $\dim_l(G\odot H)=2n$ if and only if
$\dim_l(K_1+H)=2$ or for some $i\in \{1,2\}$, there exist  $a,b\in U_i$ such that $N_H(a)\cup N_H(b)=U_j$, where $j\in \{1,2\}-\{i\}$.
\end{theorem}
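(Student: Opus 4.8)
The plan is to combine Theorem~\ref{mainTheorem} with the radius hypothesis to reduce the statement to a purely combinatorial equivalence about local metric bases of $K_1+H$, and then to verify that equivalence directly. First I would invoke the Remark preceding this theorem, giving $dim_l(G\odot H)\ge 2n$ whenever $r(H)\ge 3$, so only the question of equality remains. Since either $v$ lies in some local metric basis of $K_1+H$ or in none, Theorem~\ref{mainTheorem} splits into two cases: if $v$ lies in no basis then $dim_l(G\odot H)=n\cdot dim_l(K_1+H)$, which equals $2n$ exactly when $dim_l(K_1+H)=2$; if $v$ lies in some basis then $dim_l(G\odot H)=n(dim_l(K_1+H)-1)$, which equals $2n$ exactly when $dim_l(K_1+H)=3$. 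By Lemma~\ref{dimK1+H=2vnotin}, the hypothesis $dim_l(K_1+H)=2$ already forces $v$ into no basis, so the first case collapses to the single condition $dim_l(K_1+H)=2$. Thus $dim_l(G\odot H)=2n$ if and only if $dim_l(K_1+H)=2$ (call this alternative $P$) or $v$ lies in a local metric basis $\{v,a,b\}$ of cardinality three (call this $Q$). Since $P$ already appears in the claimed characterisation, it suffices to prove $Q\Rightarrow R$ and $R\Rightarrow (P\vee Q)$, where $R$ denotes the neighbourhood condition on $a,b$.

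For $Q\Rightarrow R$, suppose $B=\{v,a,b\}$ with $a,b\in V(H)$ is a local metric basis. Two observations drive the argument: in $K_1+H$ the vertex $v$ distinguishes every pair $(v,x)$ with $x\in V(H)$ and distinguishes no pair of vertices of $H$. Minimality forces $v$ to be essential, so $\{a,b\}$ fails on some adjacent pair; this pair cannot be an edge of $H$ (else $B$ would fail too), hence it is $(v,u)$ for some $u\in V(H)$ undistinguished by $\{a,b\}$, which happens only when $u$ is a common neighbour of $a$ and $b$. As $H$ is bipartite, a common neighbour places $a,b$ in the same part, say $a,b\in U_1$. Since $v$ distinguishes no edge of $H$, every edge $\{p,q\}$ with $p\in U_1$, $q\in U_2$ must be distinguished by $a$ or $b$; a short distance computation ($d(a,p)\in\{0,2\}$ and $d(a,q)\in\{1,2\}$) shows this holds precisely when $p\in\{a,b\}$ or $q\in N_H(a)\cup N_H(b)$. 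If some $q_0\in U_2$ lay outside $N_H(a)\cup N_H(b)$, then every neighbour of $q_0$ would have to be in $\{a,b\}$, forcing $q_0$ to be adjacent to $a$ or $b$ and contradicting $q_0\notin N_H(a)\cup N_H(b)$. Hence $N_H(a)\cup N_H(b)=U_2$, which is $R$.

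For $R\Rightarrow(P\vee Q)$, assume $a,b\in U_i$ with $N_H(a)\cup N_H(b)=U_j$; take $i=1$, $j=2$ without loss of generality. I would check directly that $\{v,a,b\}$ is a local metric generator for $K_1+H$: the vertex $v$ settles every pair $(v,x)$, and any edge $\{p,q\}$ with $p\in U_1$, $q\in U_2=N_H(a)\cup N_H(b)$ is distinguished by whichever of $a,b$ is adjacent to $q$, its distance $1$ to $q$ differing from its distance $0$ or $2$ to $p$. Thus $dim_l(K_1+H)\le 3$, while $K_1+H$ contains a triangle and so is non-bipartite, giving $dim_l(K_1+H)\ge 2$ by Theorem~\ref{The1Zhang}. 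If $dim_l(K_1+H)=2$ then $P$ holds; if $dim_l(K_1+H)=3$ then $\{v,a,b\}$ is a basis containing $v$, so $Q$ holds. This yields $P\vee Q$, completing the equivalence and hence the theorem.

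The hard part will be the implication $Q\Rightarrow R$, specifically extracting the structural conclusion $N_H(a)\cup N_H(b)=U_2$ from mere minimality of the basis. The crucial leverage is the observation that $v$ can be essential only through a pair $(v,u)$ with $u$ a common neighbour of $a$ and $b$: this single fact simultaneously pins $a,b$ to one part of the bipartition and sets up the edge-covering argument that forces the neighbourhoods to exhaust the opposite part.
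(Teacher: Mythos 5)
Your proposal is correct and follows essentially the same route as the paper: reduce via Theorem \ref{mainTheorem} and Lemma \ref{dimK1+H=2vnotin} to the two alternatives $dim_l(K_1+H)=2$ or a basis $\{v,a,b\}$, then show a basis containing $v$ forces a common neighbour of $a,b$ (hence same colour class) and that $N_H(a)\cup N_H(b)$ must exhaust the opposite class, with the same converse argument that $\{v,a,b\}$ is a generator of size three. Your write-up merely spells out in more detail the distance computations that the paper leaves implicit.
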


\begin{proof}
 By Theorem \ref{mainTheorem} we know that $\dim_l(G\odot H)=2n$  if and only if either $\dim_l(\langle v\rangle+H)=2$ and $v$ does not belong to any local metric basis for $\langle v\rangle+H$ or $\dim_l(\langle v\rangle+H)=3$ and there exists a local metric basis $B$  of $\langle v\rangle+H$ such that $v\in B$.

If $\dim_l(\langle v\rangle+H)=2$, then we are done (note that by Lemma \ref{dimK1+H=2vnotin} we have that $v$ does not belong to any local metric basis for $\langle v\rangle+H$).


Let $B=\{a,b,v\}$ be a local metric basis  of $\langle v\rangle+H$. Since $v\in B$, we have $N_H(a)\cap N_H(b)\ne \emptyset$. So, $a$ and $b$ must belong to the same color class, set $a,b\in U_1$. Hence, if there exists $y\in U_2-(N_H(a)\cup N_H(b))$, then for every $x\in N_H(y)$, the pair $x,y $ is not distinguished in $\langle v\rangle+H$ by the elements of $B$, which is a contradiction and, as a consequence, $N_H(a)\cup N_H(b)=U_2$.

Conversely, if there exists  $a,b\in U_i$ such that $N_H(a)\cup N_H(b)=U_j$, where $j\in \{1,2\}-\{i\}$, then for every $y\in U_j$ and $x\in N_H(y)$, the pair $x,y$ is distinguished by $a$ or by $b$. So,  $\{a,b,v\}$ is a local metric generator for  $\langle v\rangle+H$ and, as a consequence, $ \dim_l(\langle v\rangle+H)\le 3$. Therefore, either $\dim_l(\langle v\rangle+H)= 2$ or $\{a,b,v\}$ is a local metric basis   of $\langle v\rangle+H$.
\end{proof}

Consider the following  decision problem. The input is an arbitrary bipartite graph $H=(U_1\cup U_2,E)$  of radius three. The problem consists in deciding whether $H$ satisfies $\dim_l(K_1+H)=2$, or not. According to the next remark we deduce that the time complexity of this decision problem is at most $O(  \vert U_1\vert^2\vert U_2\vert^2)$. Although this remark is straightforward, we include the proof for completeness.

\begin{remark}\label{lemadimK1+H=2vnotinR=3}
Let  $H=(U_1,U_2,E)$ be  a bipartite  graph of radius three. Consider the following statements:
\begin{enumerate}[{\rm (i)}]
\item For some $i\in \{1,2\}$, there exist  $a,b\in U_i$ such that $\{N_H(a), N_H(b)\}$ is a partition of $U_j$, where $j\in \{1,2\}-\{i\}$.
\item There exist two vertices $a\in U_1$ and $b\in U_2$ such that for every edge $xy\in E$, where $x\in U_1$ and $y\in U_2$, it follows $y\in N_H(a)$ or   $x\in N_H(b)$.
\end{enumerate}
 Then  $\dim_l(K_1+H)=2$ if and only if {\rm (i)} or {\rm (ii)} holds.
\end{remark}

\begin{proof}
 We first note that since $K_1+H$ is not bipartite, Theorem \ref{The1Zhang} leads to $\dim_l(K_1+H)\ge 2$.

(Sufficiency)  If (i) holds, then $\{a,b\}\succ U_j$ and $N_H(a)\cap N_H(b)=\emptyset$. Hence, $\{a,b\}$ is a local metric basis   of $K_1+H$ and, as a consequence, $\dim_l(K_1+H)=2$.

Now, if (ii)  holds, it is is straightforward that  $\{a,b\}$ is a local metric basis  of $K_1+H$ and, as a consequence, $\dim_l(K_1+H)=2$.

(Necessity) Let $\{a,b\}$ be a local metric basis for of $\langle v\rangle+H$. By Lemma  \ref{dimK1+H=2vnotin} we know that $v\not \in \{a,b\}$. Then we have two possibilities.
\\
\\
\noindent Case 1. $a$ and $b$ belong to the same color class of $H$, say $a,b\in U_1$.  Since for every $x\in V(H)$ the pair $x,v$ must be distinguished by $a$ or by $b$, we conclude that $N_H(a)\cap N_H(b)=\emptyset$. Also, since every pair of adjacent vertices $x\in U_1$ and $y\in U_2$ must be distinguished by $a$ or by $b$, we conclude that $y\sim a$ or $y\sim b$ and, as a result,   $\{a,b\}\succ U_2$. Hence, we conclude that $\{N_H(a), N_H(b)\}$ is a partition of $U_2$.
\\
\\
\noindent Case 2: $a$ and $b$ belong to different color classes of $H$, say $a\in U_1$ and $b\in U_2.$ Since $\{a,b\}$ is a local metric basis for $\langle v\rangle+H$, for every edge $xy\in E$, where $x\in U_1$ and $y\in U_2$, it follows $y\in N_H(a)$ or   $x\in N_H(b)$.
\end{proof}

Note that if  $H=(U_1\cup U_2,E)$ is a bipartite graph of diameter $D(H)=3$, then for any $i\in \{1,2\}$ and $x,y\in U_i$ we have $N_H(x)\cap N_H(y)\ne \emptyset$. Hence, we deduce the following consequence of Remark \ref{lemadimK1+H=2vnotinR=3}.

\begin{corollary}
Let $H$ be a bipartite graph where $D(H)=r(H)=3$.
If  $B=\{a,b\}$ is a local metric basis for $K_1+H$, the $a$ and $b$ belong to different color classes.
\end{corollary}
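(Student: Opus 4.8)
The plan is a short proof by contradiction that rests entirely on the two facts established immediately before the statement. First I would record that since $\{a,b\}$ is a local metric basis of cardinality two we have $dim_l(K_1+H)=2$, so Lemma~\ref{dimK1+H=2vnotin} applies (recall $r(H)=3$): the vertex $v$ of $K_1$ belongs to no local metric basis, whence $a,b\in V(H)$. Each of $a,b$ therefore lies in one of the color classes $U_1,U_2$, and of course $a\ne b$.

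Next I would assume, toward a contradiction, that $a$ and $b$ lie in the same color class, say $a,b\in U_i$. The key point is that this situation is exactly Case~1 of the necessity argument inside the proof of Remark~\ref{lemadimK1+H=2vnotinR=3}. Indeed, every pair $\{x,v\}$ with $x\in V(H)$ is an adjacent pair in $K_1+H$ (since $v$ is joined to all of $H$), and for $\{a,b\}$ to distinguish it one needs $d_{K_1+H}(a,x)\ne 1$ or $d_{K_1+H}(b,x)\ne 1$, that is, $x\notin N_H(a)\cap N_H(b)$. As this must hold for every $x$, we obtain $N_H(a)\cap N_H(b)=\emptyset$.

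On the other hand, the observation stated in the paragraph just preceding this corollary gives the opposite conclusion: for a bipartite graph with $D(H)=3$, any two distinct vertices of the same color class have a common neighbor, because $a,b\in U_i$ forces $d_H(a,b)$ to be even and at most three, hence $d_H(a,b)=2$ and $N_H(a)\cap N_H(b)\ne\emptyset$. This contradicts $N_H(a)\cap N_H(b)=\emptyset$, so $a$ and $b$ cannot share a color class; they lie in different color classes, as claimed.

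There is no real obstacle here: the two competing computations of $N_H(a)\cap N_H(b)$ are precisely the empty-intersection conclusion forced by distinguishing the pairs $\{x,v\}$ and the nonempty-intersection fact forced by the diameter-three structure. The only thing to verify carefully is that both statements concern the same intersection and that $a\ne b$, so that $d_H(a,b)=2$ is genuinely available; both are immediate. Hence the corollary follows at once by contradiction.
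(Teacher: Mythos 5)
Your proof is correct and takes essentially the same route as the paper: you re-derive Case~1 of the necessity argument in Remark~\ref{lemadimK1+H=2vnotinR=3} (distinguishing the pairs $\{x,v\}$ forces $N_H(a)\cap N_H(b)=\emptyset$ when $a,b$ share a color class) and contradict it with the observation, stated just before the corollary, that $D(H)=3$ forces any two distinct same-class vertices to have a common neighbor. The only difference is cosmetic---you unfold the Remark's proof inline instead of citing its conclusion.
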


Other direct consequence of Remark \ref{lemadimK1+H=2vnotinR=3} is the following.

\begin{corollary}
Let  $H=(U_1,U_2,E)$ be  a bipartite  graph of radius three.
If for some $i\in \{1,2\}$, there exist  $a\in U_i$ such that $\delta_H(a)=\vert U_j\vert-1$, where $j\in \{1,2\}-\{i\}$, then $\dim_l(K_1+H)=2$.
\end{corollary}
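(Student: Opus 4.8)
The plan is to deduce the statement directly from Remark~\ref{lemadimK1+H=2vnotinR=3}, which characterizes exactly when $dim_l(K_1+H)=2$ for a bipartite graph of radius three. Since the corollary's hypotheses on $H$ coincide with those of the remark, it suffices to show that the degree condition forces assertion~{\rm (ii)} of the remark to hold, and then invoke the remark. By relabeling the two color classes if necessary, I may assume without loss of generality that $i=1$ and $j=2$, so that $a\in U_1$ and $\delta_H(a)=\vert U_2\vert-1$.

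The key observation is that, since $H$ is bipartite with parts $U_1$ and $U_2$, every neighbor of $a$ lies in $U_2$; hence $N_H(a)\subseteq U_2$ together with $\vert N_H(a)\vert=\vert U_2\vert-1$ forces $N_H(a)=U_2-\{y_0\}$ for a single vertex $y_0\in U_2$. The natural choice is then to take $b=y_0$. With this choice I would verify assertion~{\rm (ii)}: given any edge $xy$ with $x\in U_1$ and $y\in U_2$, either $y\ne y_0$, in which case $y\in N_H(a)$, or $y=y_0$, in which case $x\sim y_0=b$ and so $x\in N_H(b)$. In both situations $y\in N_H(a)$ or $x\in N_H(b)$, which is precisely assertion~{\rm (ii)} with this pair $a\in U_1$, $b\in U_2$.

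Having established~{\rm (ii)}, Remark~\ref{lemadimK1+H=2vnotinR=3} yields $dim_l(K_1+H)=2$ immediately, completing the argument. I do not expect a genuine obstacle here: the only points requiring care are the reduction to $i=1$, justified by the symmetry of the remark's hypotheses under swapping $U_1$ and $U_2$, and the fact that $\delta_H(a)=\vert U_2\vert-1$ pins down the non-neighbor $y_0$ \emph{uniquely}, which is exactly what makes the single vertex $b=y_0$ suffice. It is worth noting that assertion~{\rm (i)} of the remark is not the right route, as it would require a vertex whose neighborhood equals $\{y_0\}$, something the hypothesis does not guarantee; the asymmetric assertion~{\rm (ii)}, with $a$ and $b$ in different color classes, is tailored precisely to this configuration.
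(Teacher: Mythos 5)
Your proposal is correct and follows exactly the route the paper intends: the corollary is stated there as a direct consequence of Remark~\ref{lemadimK1+H=2vnotinR=3}, and your argument supplies the (omitted) verification that $\delta_H(a)=\vert U_j\vert-1$ forces $N_H(a)=U_j-\{y_0\}$, so that taking $b=y_0$ fulfills assertion~(ii) of the remark. Your closing observation that assertion~(i) is not the right route is also accurate, since the hypothesis gives no vertex whose neighborhood is exactly $\{y_0\}$.
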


\subsection{Closed formulae for $\dim_l(G\odot H)$ when $H$ is a tree of radius three}
In order to study the particular case when $H$ is a tree of radius three, we introduce   the following additional notation.
Let $T$ be a tree of radius three. For the particular case when $C(T)=\{u\}$  we consider  the forest $F_u=\cup_{w\in N_T(u)} T_{w}$ composed of all the rooted trees $T_w=(V_w,E_w),$ of root $w\in N_T(u)$,  obtained by removing the central vertex $u$ from $T$.
The height of $T_w$ is $h_w=\max_{x\in V(T_w)}\{d(w,x)\}$.
We denote by $\varsigma(T)$ the number of  trees in $F_u$ with $h_w$ equal to two, \textit{i.e.},  $\varsigma(T)=|S(T)|,$ where
$$S(T)=\{w\in N_T(u):\; h_w=2\}.$$
Note that if $h_w\ne 1$, for every $w\in N_T(u)$, then $\varsigma(T)=\delta'(T)$. So, as the following result shows,   the bound $\dim_l(G\odot T)\le n\cdot \delta'(T)$ is tight.

\begin{theorem}
Let $T$ be a tree of radius three and center $C(T)$.
The following {assertions} hold for any connected graph $G$ of order $n\ge 2$.
\begin{enumerate}[{\rm (i)}]
\item If $|C(T)|=2,$ then $\dim_l(G\odot T)=2n$
\item If $C(T)=\{u\},$ then
    $$\dim_l(G\odot T)=\left\{
                               \begin{array}{ll}
                                   n\cdot (\varsigma(T)+1),& \mbox{if there exists}\; w\in N_T(u) \;\mbox{\rm such that}\;  h_w=1,\\
                                  n\cdot \varsigma(T),& \mbox{otherwhise}.\\
                               \end{array}
                             \right.
    $$
\end{enumerate}
\end{theorem}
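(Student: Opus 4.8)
The plan is to reduce both assertions to the join $K_1+T$ through Theorem~\ref{mainTheorem}: in each case I will determine $dim_l(K_1+T)$ together with whether the apex vertex $v$ of $K_1$ belongs to a local metric basis, and then read off $dim_l(G\odot T)$. Write $u$ for a central vertex; for $w\in N_T(u)$ let $T_w$ be the branch rooted at $w$ left after deleting $u$, of height $h_w\in\{0,1,2\}$ (the bound $h_w\le 2$ holds since $r(T)=3$). Set $s=\varsigma(T)=|S(T)|$ and let $\ell$ be the number of $w\in N_T(u)$ with $h_w=1$. Two preliminary facts carry most of the argument. First, a disjointness lower bound: each branch $T_w$ with $w\in S(T)$ contains an edge $\{c,g\}$ from a level-$2$ vertex $c$ to a leaf $g$, and since $v$ never distinguishes two vertices of $T$, every distinguisher of $\{c,g\}$ in $K_1+T$ is one of $w,c,g$ or a child of $c$, hence lies in $V(T_w)$; as the branches are pairwise disjoint, any local metric generator of $K_1+T$ meets every such $V(T_w)$, so $dim_l(K_1+T)\ge s$. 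Second, a computation of $\delta'(T)$: a set $A\subseteq N_T(u)$ dominates $N_T^{(2)}(u)$ exactly when it contains every root that has a child, because in a tree the only neighbour in $N_T(u)$ of a level-$2$ vertex is its parent; therefore $\delta'(T)=\beta(u)=\ell+s$.

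For assertion (i) let $C(T)=\{u_1,u_2\}$ with $u_1\sim u_2$. I will show that $\{u_1,u_2\}$ is a local metric generator for $K_1+T$. Deleting the central edge splits $T$ into $T_1\ni u_1$ and $T_2\ni u_2$, each of depth at most $2$ from its root since $r(T)=3$. A pair $\{v,x\}$ fails to be distinguished by both $u_1$ and $u_2$ only if $x\in N_T(u_1)\cap N_T(u_2)$, which is empty because $T$ is triangle-free and $u_1\sim u_2$. For an edge lying in $T_1$, the endpoint nearer $u_1$ is either $u_1$ or a child of $u_1$, so $u_1$ is adjacent to exactly one of its endpoints; the central edge is incident with $u_1$, and edges in $T_2$ are handled symmetrically by $u_2$. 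Hence $dim_l(K_1+T)=2$ (it is at least $2$ since $K_1+T$ is not bipartite), so by Lemma~\ref{dimK1+H=2vnotin} the vertex $v$ lies in no basis, and Theorem~\ref{mainTheorem}(i), or equivalently Theorem~\ref{ThdimG0H=2n-R=3}, gives $dim_l(G\odot T)=2n$.

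For assertion (ii) let $C(T)=\{u\}$. The set $S(T)\cup\{u\}$ is a local metric generator for $K_1+T$: the vertex $u$ distinguishes every edge incident with $u$, every edge joining a neighbour of $u$ to a child of it, and every pair $\{v,x\}$ with $x\notin N_T[u]$, while each $w\in S(T)$ distinguishes the edges of its own branch joining a level-$2$ vertex to a leaf and, being non-adjacent to the other roots, separates $v$ from every remaining root. Thus $dim_l(K_1+T)\le s+1$, and with the disjointness bound $dim_l(K_1+T)\in\{s,s+1\}$. If no branch has height $1$, then $\delta'(T)=s$ by the computation above, so $dim_l(K_1+T)$ equals either $\delta'(T)$ or $\delta'(T)+1$; since by Lemma~\ref{lemmaRadio3Bipartite} the equality $dim_l(K_1+T)=\delta'(T)+1$ holds precisely when $v$ belongs to a basis, Theorem~\ref{mainTheorem} gives $n\,\delta'(T)=n\,\varsigma(T)$ in both sub-possibilities. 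If some branch $T_{w_0}$ has height $1$, I will sharpen the lower bound to $dim_l(K_1+T)\ge s+1$: by the disjointness bound a generator of size $s$ would consist of exactly one vertex in each $V(T_w)$, $w\in S(T)$, and nothing else, so it would omit $u$ and all of $T_{w_0}$; but then the pendant edge $\{w_0,c_0\}$ of $T_{w_0}$, whose distinguishers all lie in $\{u\}\cup V(T_{w_0})$, would be undistinguished. Hence $dim_l(K_1+T)=s+1<\ell+s+1=\delta'(T)+1$, so by Lemma~\ref{lemmaRadio3Bipartite} the vertex $v$ lies in no basis, and Theorem~\ref{mainTheorem}(i) gives $n(s+1)=n(\varsigma(T)+1)$.

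The verifications that the displayed sets are generators and the count $\delta'(T)=\ell+s$ are routine; the genuinely delicate point is the sharp lower bound in (ii). The conceptual reason for the dichotomy is that the centre $u$ distinguishes all the edges joining a neighbour of $u$ to a child of it simultaneously, so height-$1$ branches contribute a single shared $+1$ to the dimension rather than one generator vertex apiece—which is exactly why the true value $s+1$ can be strictly smaller than the bound $\delta'(T)+1=\ell+s+1$ furnished by Lemma~\ref{lemmaRadio3Bipartite}, and why the two regimes ($\ell=0$ and $\ell\ge 1$) behave differently.
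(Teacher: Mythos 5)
Your proof is correct, and while it shares the paper's overall skeleton (reduction via Theorem~\ref{mainTheorem}, the same argument for assertion (i), the disjointness lower bound $dim_l(K_1+T)\ge\varsigma(T)$, and the generator $S(T)\cup\{u\}$), it closes the hardest case by a genuinely different route. Where every branch has height $0$ or $2$, the paper pins down the situation structurally: it introduces $\varphi(T_w)$, the number of non-leaf children of each root, constructs an explicit basis of size $\varsigma(T)$ avoiding $u$ when some $\varphi(T_{w_i})=1$, and shows that when all $\varphi(T_w)\ge 2$ the only bases are $\{u\}\cup S(T)$ and $\{v\}\cup S(T)$, then applies part (i) or (ii) of Theorem~\ref{mainTheorem} accordingly. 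You sidestep this entire sub-case analysis: after computing $\delta'(T)=\ell+\varsigma(T)$, you observe that $dim_l(K_1+T)\in\{\varsigma(T),\varsigma(T)+1\}$ and invoke the ``if and only if'' in Lemma~\ref{lemmaRadio3Bipartite} so that either $v$ lies in no basis and $dim_l(K_1+T)=\varsigma(T)$ (part (i) gives $n\varsigma(T)$), or $v$ lies in some basis and $dim_l(K_1+T)=\delta'(T)+1=\varsigma(T)+1$ (part (ii) gives $n\varsigma(T)$) --- the two alternatives yield the same product, so you never need to decide which one occurs. The same lemma also replaces the paper's direct inspection of bases to rule out $v$ in the height-one case, via $\varsigma(T)+1<\delta'(T)+1$. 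Your approach buys brevity and robustness (no tree-structure dichotomy on $\varphi$); the paper's buys finer information, namely the exact value of $dim_l(K_1+T)$ and a description of its local metric bases in each structural sub-case.
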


\begin{proof}
It is well-known that the center of a tree consists of either a single vertex or two adjacent vertices.

We first consider the case where $C(T)$ consists of  two adjacent vertices, say $C(T)=\{u',u''\}$. Note that in this case, if we remove the edge $\{u',u''\}$ from $T$, we obtain two rooted trees $T'=(V',E')$ and $T''=(V'',E'')$, with roots  $u'$ and $u''$, respectively, where the distance from the root to the leaves is at most two. Hence, in $K_1+T$ every pair of adjacent vertices $x,y\in V'$ is distinguished by $u'$ and every pair of adjacent vertices $x,y\in V''$ is  distinguished by $u''$. Also, for every $x\in V'-\{u'\}$ the pair $v,x$ is distinguished by $u''$ and for every $x\in V''-\{u''\}$, the pair $v,x$ is distinguished by $u'$, where $v$ is the vertex of $K_1$. So, $C(T)$ is a local metric generator for $K_1+T$. Hence, $\dim_l(K_1+T)\le 2$ and, since $K_1+T$ is not bipartite, by Theorem  \ref{The1Zhang} we conclude that $\dim_l(K_1+T)=2$. Now, in this case, if  the vertex of $K_1$ belongs to a  local metric basis for $K_1+T$, then there exists $z\in V(T)$ such that  $z$ distinguishes any pair of adjacent vertices $x,y\in V(T)$, and as a consequence $r(T)\le 2$, which is a contradiction. Thus,  we conclude that the vertex of $K_1$ does not belong  to any local metric basis for $K_1+T$. Therefore, as a consequence of Theorem \ref{mainTheorem} (i)  we obtain $\dim_l(G\odot T)=2n$.

Now let us consider the case where the center of $T$ consists of a single vertex, say $C(T)=\{u\}$. Let $B$ be a local metric basis for $K_1+T$. We first note that for every rooted tree $T_w=(V_w,E_w)$ of height two we have $|B\cap V_w|=1$, due to the fact that in $K_1+T$ the vertex  $w\in N_T(u)$  distinguishes  every pair of adjacent vertices $x,y\in V_w$ and no vertex of $V(K_1+T)-V_w$ distinguishes  a pair of adjacent vertices where one vertex is a leaf. Hence, $\dim_l(K_1+T)\ge \varsigma(T).$
Now we differentiate the following cases.
\\
\\
\noindent
Case 1.  There exists $w\in N_T(u)$   such that $h_w=1$.  In this case, the subgraph of $T$ induced by the set  $X=\displaystyle\cup_{ h_w\le 1 }V_w\cup \{u\}$ is a tree of root $u$ and height two. Hence, as above we conclude that $|B\cap X|= 1$. So, $\dim_l(K_1+T)\ge \varsigma(T)+1.$ In order to show that the set  $A=\{u\}\cup S(T)$ is a local metric basis for $K_1+T$ we only need to observe that $N_{T}(w)\cap N_T(u)=\emptyset$ and, as a consequence, for every $x\in V(T)$ the pair $x,v$ is distinguished by  some $z\in A$.
Thus, $\dim_l(K_1\odot T)=\varsigma(T)+1.$

Moreover, since for every metric basis $A$ of $K_1+T$ we have  $|A\cap X|= 1$ and for every rooted tree $T_w=(V_w,E_w)$ of height two,  $|A\cap V_w|=1$, we conclude that the vertex of $K_1$ does not belong to any local metric basis for $K_1+T$.  Therefore, as a consequence of Theorem \ref{mainTheorem} (i)   we obtain $\dim_l(G\odot T)=n(\varsigma(T)+1)$.
\\
\\
\noindent Case 2. For every $w\in N_T(u)$,  $h_w\ne 1$. In this case we define $$\varphi(T_w)=|\{z\in N_{T_w}(w):\; \delta_T(z)\ge 2\}|.$$
Suppose there exists $w_i\in N_T(u)$ such that $\varphi(T_{w_i})=1$. With this assumption we define $$A'=\{z\}\cup S(T)-\{w_i\},$$ where $z\in V_{w_i}$ and  $\delta_T(z)\ge 2$. Note that every pair of adjacent vertices $x,y\in \{u\}\cup  V_{w_i}$ is distinguished by $z$. So, by analogy to Case 1 we show that $A'$ is a local metric basis for $K_1+T$ and the vertex of $K_1$ does not belong to any local metric basis for $K_1+T$.  Therefore, as a consequence of Theorem \ref{mainTheorem} (i)   we obtain $\dim_l(G\odot T)=n\cdot \varsigma(T)$.

On the other hand, if for every  $w\in S(T)$ it follows  $\varphi(T_w)\ge 2$, then $w$ is the only vertex of $V_w$ which distinguishes every pair of adjacent vertices $x,y\in V_w$. Thus, in such a case $S(T)$ is a subset of any local metric basis for $K_1+T$ and, as a consequence, the only two local metric basis for $K_1+T$ are $\{u\}\cup S(T)$ and $\{v\}\cup S(T)$. Therefore, as a consequence of Theorem \ref{mainTheorem} (ii)   we obtain $\dim_l(G\odot T)=n\cdot \varsigma(T)$.
\end{proof}


\begin{thebibliography}{10}
\expandafter\ifx\csname url\endcsname\relax
  \def\url#1{\texttt{#1}}\fi
\expandafter\ifx\csname urlprefix\endcsname\relax\def\urlprefix{URL }\fi

\bibitem{Bailey2011}
R.~Bailey, K.~Meagher, On the metric dimension of grassmann graphs, Discrete
  Mathematics \& Theoretical Computer Science 13 (2011) 97--104.

\bibitem{BondyBook2008}
J.~A. Bondy, U.~S.~R. Murty, Graph theory, vol. 244 of Graduate Texts in
  Mathematics, Springer, New York, 2008.
\newline\urlprefix\url{http://dx.doi.org/10.1007/978-1-84628-970-5}

\bibitem{Brigham2003}
R.~C. Brigham, G.~Chartrand, R.~D. Dutton, P.~Zhang, Resolving domination in
  graphs, Mathematica Bohemica 128~(1) (2003) 25--36.
\newline\urlprefix\url{http://mb.math.cas.cz/mb128-1/3.html}

\bibitem{Caceres2007}
J.~C\'{a}ceres, C.~Hernando, M.~Mora, I.~M. Pelayo, M.~L. Puertas, C.~Seara,
  D.~R. Wood., On the metric dimension of cartesian product of graphs, SIAM
  Journal in Discrete Mathematics 21~(2) (2007) 423--441.
\newline\urlprefix\url{http://epubs.siam.org/doi/abs/10.1137/050641867}

\bibitem{Chartrand2000}
G.~Chartrand, L.~Eroh, M.~A. Johnson, O.~R. Oellermann, Resolvability in graphs
  and the metric dimension of a graph, Discrete Applied Mathematics 105~(1-3)
  (2000) 99--113.
\newline\urlprefix\url{http://dx.doi.org/10.1016/S0166-218X(00)00198-0}

\bibitem{Chartrand2003}
G.~Chartrand, V.~Saenpholphat, P.~Zhang, The independent resolving number of a
  graph, Mathematica Bohemica 128~(4) (2003) 379--393.
\newline\urlprefix\url{http://mb.math.cas.cz/mb128-4/4.html}

\bibitem{Feng20121266}
M.~Feng, K.~Wang, On the metric dimension of bilinear forms graphs, Discrete
  Mathematics 312~(6) (2012) 1266 -- 1268.
\newline\urlprefix\url{http://www.sciencedirect.com/science/article/pii/S0012365X11005279}

\bibitem{Frucht1970}
R.~Frucht, F.~Harary, On the corona of two graphs, Aequationes Mathematicae
  4~(3) (1970) 322--325.
\newline\urlprefix\url{http://dx.doi.org/10.1007/BF01844162}

\bibitem{Guo2012raey}
J.~Guo, K.~Wang, F.~Li, Metric dimension of some distance-regular graphs,
  Journal of Combinatorial Optimization 26 (1) (2013) 190--197.
\newline\urlprefix\url{http://dx.doi.org/10.1007/s10878-012-9459-x}

\bibitem{Harary1976}
F.~Harary, R.~A. Melter, On the metric dimension of a graph, Ars Combinatoria 2
  (1976) 191--195.
\newline\urlprefix\url{http://www.ams.org/mathscinet-getitem?mr=0457289}

\bibitem{Haynes2006}
T.~W. Haynes, M.~A. Henning, J.~Howard, Locating and total dominating sets in
  trees, Discrete Applied Mathematics 154~(8) (2006) 1293--1300.
\newline\urlprefix\url{http://www.sciencedirect.com/science/article/pii/S0166218X06000035}

\bibitem{Iswadi} H. Iswadi, E.T Baskoro, R. Simanjuntak, On the Metric Dimension of Corona Product of Graphs, Far East Journal of Mathematical Sciences  52 (2010)   155 -- 170.

\bibitem{Johnson1993}
M.~Johnson, Structure-activity maps for visualizing the graph variables arising
  in drug design, Journal of Biopharmaceutical Statistics 3~(2) (1993)
  203--236, pMID: 8220404.
\newline\urlprefix\url{http://www.tandfonline.com/doi/abs/10.1080/10543409308835060}

\bibitem{Johnson1998}
M.~A. Johnson, Browsable structure-activity datasets, in: R.~Carb\'{o}-Dorca,
  P.~Mezey (eds.), Advances in Molecular Similarity, JAI Press Inc, Stamford,
  Connecticut, 1998, pp. 153--170.
\newline\urlprefix\url{http://books.google.es/books?id=1vvMsHXd2AsC}

\bibitem{Khuller1996}
S.~Khuller, B.~Raghavachari, A.~Rosenfeld, Landmarks in graphs, Discrete
  Applied Mathematics 70 (1996) 217--229.
\newline\urlprefix\url{http://www.utdallas.edu/~rbk/papers/beacons.pdf}

\bibitem{Kuziak2013}D. Kuziak, I.  G. Yero and J. A. Rodr\'{\i}guez-Vel\'{a}zquez, On the strong metric dimension of corona product graphs and join
	graphs, Discrete Applied Mathematics 161 (2013) 1022--1027.
\newline\urlprefix\url{http://www.sciencedirect.com/science/article/pii/S0166218X12003897}

\bibitem{Melter1984}
R.~A. Melter, I.~Tomescu, Metric bases in digital geometry, Computer Vision,
  Graphics, and Image Processing 25~(1) (1984) 113--121.
\newline\urlprefix\url{http://www.sciencedirect.com/science/article/pii/0734189X84900513}

\bibitem{Okamoto2010}
F.~Okamoto, B.~Phinezy, P.~Zhang, The local metric dimension of a graph,
  Mathematica Bohemica 135~(3) (2010) 239--255.
\newline\urlprefix\url{http://dml.cz/dmlcz/140702}


\bibitem{Rodriguez-Velazquez} 
J. A. Rodr\'{\i}guez-Vel\'azquez and I. G. Yero and D. Kuziak, Partition dimension of corona product graphs, Ars Combinatoria. To appear.


\bibitem{Saenpholphat2004}
V.~Saenpholphat, P.~Zhang, Conditional resolvability in graphs: a survey,
  International Journal of Mathematics and Mathematical Sciences 2004~(38)
  (2004) 1997--2017.
\newline\urlprefix\url{http://www.hindawi.com/journals/ijmms/2004/247096/abs/}

\bibitem{Sebo2004}
A.~Seb\"{o}, E.~Tannier, On metric generators of graphs, Mathematics of
  Operations Research 29~(2) (2004) 383--393.
\newline\urlprefix\url{http://dx.doi.org/10.1287/moor.1030.0070}

\bibitem{Slater1975}
P.~J. Slater, Leaves of trees, Congressus Numerantium 14 (1975) 549--559.
\newline\urlprefix\url{http://130.203.133.150/showciting?cid=5100917}

\bibitem{Yero2011}
I.~G. Yero, D.~Kuziak, J.~A. Rodr\'{\i}quez-Vel\'{a}zquez, On the metric
  dimension of corona product graphs, Computers \& Mathematics with
  Applications 61~(9) (2011) 2793--2798.
\newline\urlprefix\url{http://www.sciencedirect.com/science/article/pii/S0898122111002094}

\end{thebibliography}
\end{document}